\newtheorem{theorem}{Theorem}[section]
\newtheorem{proposition}[theorem]{Proposition}
\newtheorem{lemma}[theorem]{Lemma}
\newtheorem{corollary}[theorem]{Corollary}
\newtheorem{definition}[theorem]{Definition}
\newcommand{\N}{\mathbb{N}}
\newcommand{\eps}{\varepsilon}
\title{The asymptotic behaviour of convex combinations of firmly nonexpansive mappings}
\author{Andrei Sipo\c s${}^{a,b}$\\[2mm]
\footnotesize ${}^a$Department of Mathematics, Technische Universit\"at Darmstadt,\\
\footnotesize Schlossgartenstrasse 7, 64289 Darmstadt, Germany\\[1mm]
\footnotesize ${}^b$Simion Stoilow Institute of Mathematics of the Romanian Academy,\\
\footnotesize Calea Grivi\c tei 21, 010702 Bucharest, Romania \\[2mm]
\footnotesize E-mail: sipos@mathematik.tu-darmstadt.de\\
}
\date{}
\begin{document}

\maketitle

\begin{abstract}
We show that in the framework of CAT(0) spaces, any convex combination of two mappings which are firmly nonexpansive -- or which satisfy the more general property $(P_2)$ -- is asymptotically regular, conditional on its fixed point set being nonempty, and, in addition, also $\Delta$-convergent to such a fixed point. These results are established by the construction and study of a convex combination metric on the Cartesian square of a CAT(0) space. We also derive a uniform rate of asymptotic regularity in the sense of proof mining. All these results are then interpreted in the special case of the mappings being projections onto closed, convex sets.

\noindent {\em Mathematics Subject Classification 2010}: 46N10, 47J25, 41A65, 03F10.

\noindent {\em Keywords:} CAT(0) spaces, firmly nonexpansive mappings, convex optimization, averaged projections, asymptotic regularity, proof mining.
\end{abstract}

\section{Introduction and preliminaries}

So far, a significant amount of research in nonlinear analysis has been driven by the search for iterative algorithms that approximate fixed points of self-mappings of (subsets of) spaces of various kinds, such as Banach or Hilbert spaces, the latter especially in convex optimization -- see, e.g. \cite{BauCom17}. More recently, the attention has turned towards nonlinear generalizations of such objects, i.e. metric spaces endowed with some additional structure or property that allows one to extend in a natural way arguments that were developed in the context of normed spaces. For example, we say that a metric space $(X,d)$ is {\it geodesic} if for any two points $x,y \in X$ there is a geodesic that joins them, i.e. a mapping $\gamma : [0,1] \to X$ such that $\gamma(0)=x$, $\gamma(1)=y$ and for any $t,t' \in [0,1]$ we have that
$$d(\gamma(t),\gamma(t')) = |t-t'| d(x,y).$$
A subclass of geodesic spaces that is usually considered to be the proper nonlinear analogue of Hilbert spaces is the class of CAT(0) spaces, introduced by A. Aleksandrov \cite{Ale51} and named as such by M. Gromov \cite{Gro87}, defined as those geodesic spaces $(X,d)$ such that for any geodesic $\gamma : [0,1] \to X$ and for any $z \in X$ and $t \in [0,1]$ we have that
$$d^2(z,\gamma(t)) \leq (1-t)d^2(z,\gamma(0)) + td^2(z,\gamma(1)) - t(1-t)d^2(\gamma(0),\gamma(1)).$$
It is known (see \cite[Theorem 6]{BerNik08}) that for a geodesic space $(X,d)$, the above condition is equivalent to the following: for all $x,y,z,w \in X$,
\begin{equation}
d^2(x,z) + d^2(y,w) \leq d^2(x,y) + d^2(y,z) + d^2(z,w) + d^2(w,x). \label{cat0-easy}
\end{equation}
Another well-known fact about CAT(0) spaces is that each such space $(X,d)$ is {\it uniquely geodesic} -- that is, for any $x,y \in X$ there is a unique such geodesic $\gamma : [0,1] \to X$ that joins them -- and in this context we shall denote, for any $t \in [0,1]$, the point $\gamma(t)$ by $(1-t)x + ty$.

The mappings that play a central role in convex optimization are the {\it firmly nonexpansive} ones, since they encompass a large number of concrete and useful cases such as proximal mappings or resolvents. The following nonlinear generalization of firmly nonexpansive mappings was introduced in \cite{AriLeuLop14}.

\begin{definition}\label{def-fn}
Let $(X,d)$ be a CAT(0) space. A mapping $T : X \to X$ is called {\em firmly nonexpansive} if for any $x,y \in X$ and any $t \in [0,1]$ we have that
$$d(Tx,Ty) \leq d((1-t)x + tTx,(1-t)y+tTy).$$
\end{definition}

A primary example of a firmly nonexpansive mapping is the following. Given a nonempty closed convex subset $C$ of a CAT(0) space $(X,d)$, we have, by \cite[Proposition II.2.4]{BriHae99}, that for any $x \in X$ there exists a unique $z \in C$ such that for any $w \in C$ we have that $d(x,z) \leq d(x,w)$. Such a point $z$ is called the {\em metric projection} of $x$ onto $C$ and is denoted by $P_Cx$. By \cite[Proposition 3.1]{AriLeuLop14}, the metric projection operator $P_C : X \to X$ is firmly nonexpansive.

As mentioned in \cite{AriLopNic15,KohLopNic17}, if $(X,d)$ is a CAT$(0)$ space, any firmly nonexpansive mapping $T:X \to X$ satisfies {\em property $(P_2)$}. Namely, for all $x,y \in X$,
\begin{equation}
2d^2(Tx,Ty) \leq d^2(x,Ty) + d^2(y,Tx) - d^2(x,Tx) - d^2(y,Ty). \label{p2}
\end{equation}
If $X$ is a Hilbert space, property $(P_2)$ is sufficient for firm nonexpansivity as the above property immediately yields $\|Tx - Ty\|^2 \le \langle Tx-Ty,x-y \rangle$, which is, in turn, equivalent to Definition~\ref{def-fn} (see, e.g., \cite[Proposition 4.4]{BauCom17} for a proof).

From now on, if $T$ is a self-mapping of some set, we denote by $Fix(T)$ the set of its fixed points. A fundamental concept in the fixed point theory of self-mappings is {\it asymptotic regularity}, introduced by Browder and Petryshyn \cite{BroPet66}. If $T$ is a self-mapping of a set that contains a sequence $(x_n)$, then $(x_n)$ is said to be {\it $T$-asymptotically regular} or to be an {\it approximate fixed point sequence} for $T$ if
$$\lim_{n \to \infty} d(x_n,Tx_n) = 0.$$
We may say of an arbitrary sequence $(x_n)$ that it is simply {\it asymptotically regular} if
$$\lim_{n \to \infty} d(x_n,x_{n+1}) = 0.$$
If the sequence is the {\it Picard iteration} corresponding to a self-mapping $T$ starting from a point $x$ -- that is, the sequence $(T^nx)$ -- then the two notions coincide. In fact, the original formulation called a mapping $T$ to be {\it asymptotically regular} if for any $x$ in its domain, the Picard iteration of $T$ starting from $x$ is an asymptotically regular sequence in the sense just defined.

By \cite{AriLeuLop14}, if $(X,d)$ is a CAT(0) space and $T: X \to X$ is a firmly nonexpansive mapping with $Fix(T) \neq \emptyset$, then $T$ is asymptotically regular; it is also an immediate consequence of Theorem~\ref{th-comp}, whose statement will be shown momentarily, that this holds also if $T$ only satisfies property $(P_2)$. However, these classes of functions are not necessarily closed under composition (not even in Hilbert spaces where the two classes coincide -- see, e.g. \cite[Example 4.45]{BauCom17}), so we may naturally ask ourselves if the property continues to hold if $T$ is a composition of finitely many firmly nonexpansive mappings. In Hilbert spaces, this is an old result, going back to Bruck and Reich \cite{BruRei77}, who proved it for the more general case of $T$ being strongly nonexpansive. In CAT(0) spaces, on the other hand, we only have the following recent result for the composition of two mappings.

\begin{theorem}[cf. {\cite[Theorem 3.3]{AriLopNic15}}]\label{th-comp}
Let $(X,d)$ be a CAT(0) space and let $T_1, T_2 : X \to X$ satisfy property $(P_2)$. Put $T: = T_2 \circ T_1$. Assuming that $Fix(T) \neq \emptyset$, we have that $T$ is asymptotically regular.
\end{theorem}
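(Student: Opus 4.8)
The plan is to follow the classical Hilbert-space strategy of Bruck and Reich --- a mapping satisfying $(P_2)$ is ``strongly nonexpansive'', and strong nonexpansiveness is stable under composition --- but to recast the quantitative core of that argument in purely metric terms so that it survives the passage to CAT(0). First the reduction: fix $p\in Fix(T)$ and put $q:=T_1p$, so that $T_2q=p$ and $q\in Fix(T_1\circ T_2)$. It suffices to prove that for an arbitrary starting point $x_0$ the Picard iteration $x_{n+1}:=Tx_n$ satisfies $d(x_n,x_{n+1})\to 0$; writing $y_n:=T_1x_n$ we have $x_{n+1}=T_2y_n$ and $y_{n+1}=(T_1\circ T_2)y_n$. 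A first observation is that $(P_2)$ already implies nonexpansiveness: applying \eqref{cat0-easy} to the quadruple $(u,v,T_iv,T_iu)$ and combining with \eqref{p2} yields $d^2(T_iu,T_iv)\le d^2(u,v)$. Hence $T$ and $T_1\circ T_2$ are nonexpansive with fixed points $p$ and $q$; since $d(x_{n+1},p)=d(T_2y_n,T_2q)\le d(y_n,q)=d(T_1x_n,T_1p)\le d(x_n,p)$, the three real sequences $(d(x_n,p))$, $(d(y_n,q))$, $(d(x_{n+1},p))$ are monotone and interleaved, so they converge to a common limit $L$; in particular $d^2(x_n,p)-d^2(y_n,q)\to 0$ and $d^2(y_n,q)-d^2(x_{n+1},p)\to 0$.

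The quantitative heart will be the following metric version of strong nonexpansiveness, which I would prove as a lemma: \emph{if $S:X\to X$ satisfies $(P_2)$, then for all $u,v\in X$,}
\[
4\,d^2\!\left(\tfrac12 u+\tfrac12 Sv,\ \tfrac12 Su+\tfrac12 v\right)\ \le\ d^2(u,v)-d^2(Su,Sv).
\]
To prove it, abbreviate $a:=\tfrac12 u+\tfrac12 Sv$ and $b:=\tfrac12 Su+\tfrac12 v$; applying the CAT(0) inequality with apex $b$ to the geodesic $[u,Sv]$, and then with apexes $u$ and $Sv$ to the geodesic $[Su,v]$ (of which $b$ is the midpoint), one gets $4d^2(a,b)\le d^2(u,Su)+d^2(v,Sv)+d^2(Su,Sv)+d^2(u,v)-d^2(u,Sv)-d^2(v,Su)$, and substituting \eqref{p2} collapses the right-hand side to $d^2(u,v)-d^2(Su,Sv)$. (In a Hilbert space the left-hand side is exactly $\|(u-p')-\cdots\|^2$-type discrepancy and the lemma is the usual consequence of firm nonexpansivity; the point is that midpoints replace difference vectors.)

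Next I would apply this lemma twice: to $S=T_1$ at $(u,v)=(x_n,p)$ and to $S=T_2$ at $(u,v)=(y_n,q)$, obtaining
\[
4\,d^2\!\left(\tfrac12 x_n+\tfrac12 q,\ \tfrac12 y_n+\tfrac12 p\right)\le d^2(x_n,p)-d^2(y_n,q),\qquad
4\,d^2\!\left(\tfrac12 y_n+\tfrac12 p,\ \tfrac12 x_{n+1}+\tfrac12 q\right)\le d^2(y_n,q)-d^2(x_{n+1},p).
\]
By the previous paragraph both right-hand sides tend to $0$, so these two midpoint distances tend to $0$. Feeding this back into the CAT(0) inequality (with apexes $y_n$ and $p$ against the geodesic $[x_n,q]$, so that $d(y_n,\tfrac12 x_n+\tfrac12 q)$ and $d(p,\tfrac12 x_n+\tfrac12 q)$ are pinned near $\tfrac12 d(y_n,p)$ and $\tfrac12 d(p,y_n)$) together with \eqref{p2} for $T_1$ at $(x_n,p)$, one extracts $d(x_n,y_n)\to d(p,q)$, and symmetrically $d(y_n,x_{n+1})\to d(p,q)$.

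It remains to deduce $d(x_n,x_{n+1})\to 0$, and this is the step I expect to be the main obstacle. In a Hilbert space it is automatic: the discrepancies $(x_n-p)-(y_n-q)$ and $(y_n-q)-(x_{n+1}-p)$ add to $x_n-x_{n+1}$, so the two displayed estimates sum to $d^2(x_n,p)-d^2(x_{n+1},p)\ge\tfrac12 d^2(x_n,x_{n+1})$, and one telescopes. In CAT(0) the discrepancies persist only as statements about midpoints, and $\tfrac12 x_n+\tfrac12 q\approx\tfrac12 x_{n+1}+\tfrac12 q$ does \emph{not} by itself force $x_n\approx x_{n+1}$ (consider a tree, where the geodesics to $q$ may share their first halves). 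The way around is to note that the accumulated information forces the triangle $\triangle(x_n,y_n,x_{n+1})$ to degenerate: $y_n$ lies asymptotically as far from the midpoint of $[x_n,x_{n+1}]$ as the side lengths $d(x_n,y_n)\to d(p,q)$ and $d(y_n,x_{n+1})\to d(p,q)$ permit, so a last application of the CAT(0) inequality (apex $y_n$ against $[x_n,x_{n+1}]$) squeezes $d^2(x_n,x_{n+1})$ below a null sequence; alternatively one can run this endgame through $\Delta$-convergence of $(x_n)$ and the demiclosedness afforded by the interleaved estimates. Since $(d(x_n,x_{n+1}))$ is itself non-increasing --- $T$ being nonexpansive --- this yields $d(x_n,x_{n+1})\to 0$, which is the asserted asymptotic regularity.
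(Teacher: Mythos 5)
You are attempting a from-scratch proof of a result the paper does not prove at all: Theorem~\ref{th-comp} is imported verbatim from \cite{AriLopNic15}, so the only question is whether your argument stands on its own. Much of it does. The reduction to the interleaved sequences $x_n$, $y_n=T_1x_n$ with $q=T_1p$, the observation that $(P_2)$ together with \eqref{cat0-easy} yields nonexpansiveness, the convergence of the interleaved distances to a common limit, and your midpoint lemma $4\,d^2\bigl(\tfrac12 u+\tfrac12 Sv,\ \tfrac12 Su+\tfrac12 v\bigr)\le d^2(u,v)-d^2(Su,Sv)$ are all correct (the two applications of the CN inequality plus \eqref{p2} do collapse as you claim), and this lemma is indeed the right metric surrogate for the Hilbert-space identity $\|(u-Su)-(v-Sv)\|^2\le\|u-v\|^2-\|Su-Sv\|^2$.

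The proof nevertheless has a genuine gap, and it sits exactly where you say you expect the obstacle: all your estimates deliver is $d\bigl(\tfrac12 x_n+\tfrac12 q,\ \tfrac12 x_{n+1}+\tfrac12 q\bigr)\to 0$, and, as your own tree example shows, closeness of these midpoints does not force $d(x_n,x_{n+1})\to 0$. Neither of the proposed repairs closes it. First, the intermediate claim $d(x_n,y_n)\to d(p,q)$ is only asserted (``one extracts''); combining \eqref{p2} for $T_1$ at $(x_n,p)$ with \eqref{cat0-easy} on the quadruple $(x_n,y_n,q,p)$ merely reproduces $d^2(x_n,p)-d^2(y_n,q)\to 0$ and gives no control on $d(x_n,y_n)$, so a real derivation is missing. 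Second, even granting that claim, the final ``squeeze'' with apex $y_n$ over $[x_n,x_{n+1}]$ needs the lower bound $\liminf_n d\bigl(y_n,\tfrac12 x_n+\tfrac12 x_{n+1}\bigr)\ge d(p,q)$; this is precisely the metric analogue of the vector cancellation $(x_n-p)-(y_n-q)+(y_n-q)-(x_{n+1}-p)=x_n-x_{n+1}$ that is unavailable in CAT(0), and nothing you have established implies it. Third, the suggested detour through $\Delta$-convergence and demiclosedness is circular: those tools in CAT(0) spaces take an approximate fixed point sequence, i.e.\ asymptotic regularity, as a hypothesis. A useful sanity check that the missing step is where the real work lies: the known quantitative analysis of this theorem (Theorem~\ref{th-comp-rate}, from \cite{KohLopNic17}) gives a rate exponential in $1/\eps$, whereas your sketch, if it telescoped as in Hilbert space, would give a polynomial rate; so the endgame cannot be the routine step the proposal makes it out to be, and the argument as written does not prove the theorem.
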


Such a Picard iteration, therefore, aims to approximate fixed points of $T$. In the special case of $Fix(T_1) \cap Fix(T_2)$ being nonempty, it is known (by \cite[Proposition 2.1]{AriLopNic15}) that this set coincides with $Fix(T)$ and we speak in this case of a {\it consistent feasibility problem}. The case studied above may be located between this and a more general version where we only know that each $Fix(T_i)$ is nonempty. Such a problem has been already studied in \cite{Bau03, BauMarMofWan12} from the viewpoint of asymptotic regularity, though in the setting of Hilbert spaces, and it is commonly known as a problem of {\it inconsistent feasibility}.

In the midway situation elaborated upon in the above -- which we may therefore dub the problem of {\it intermediate feasibility} -- it is also relevant to ask whether the fixed points of $T$ are approximated, in addition, from the viewpoint of convergence. Since a large amount of iterative algorithms that are used for this purpose in the context of normed spaces only weakly converge to their desired target, we would not expect strong convergence to hold in this more general geodesic setting. Therefore, one needs a suitable analogue of the concept of weak convergence.

The notion of $\Delta$-convergence was first defined by Lim \cite{Lim76} in metric spaces, and some equivalent notions in the setting of complete CAT(0) spaces may be found in \cite{EspFer09,Jos94,Kuc80}. It is indeed a proper generalization of weak convergence, since it coincides with the latter in Hilbert spaces, as per \cite[Exercise 3.1]{Bac14}; see also \cite{KirPan08} for related results in connection to Banach spaces. To define this notion, we first need to define the auxiliary one of {\it asymptotic center}. Let $(x_n)$ be a bounded sequence in a complete CAT(0) space $X$. Define
$$r((x_n)):= \inf_{y \in X} \limsup_{n \to \infty} d(y,x_n)$$
and then put $A((x_n)) := \{c \in X \mid \limsup_{n \to \infty} d(c,x_n) = r((x_n))\}$. The elements of this latter set are called {\em asymptotic centers} of $(x_n)$. It is known (see \cite[Proposition 7]{DhoKirSim06}) that every bounded sequence $(x_n)$ in a complete CAT(0) space $X$ has a unique asymptotic center. That being said, $\Delta$-convergence is defined as follows.

\begin{definition}
A bounded sequence $(x_n)$ {\em $\Delta$-converges} to a point $x \in X$ if for any subsequence 
$(x_{n_k})$ of $(x_n)$ we have that $A((x_{n_k}))=\{x\}$.
\end{definition}

Now we may properly state the convergence theorem obtained in \cite{AriLopNic15}.

\begin{theorem}[cf. {\cite[Theorem 4.2]{AriLopNic15}}]\label{th-deltaconv}
Let $(X,d)$ be a complete CAT(0) space and let $T_1, T_2 : X \to X$ satisfy property $(P_2)$. Put $T: = T_2 \circ T_1$. Assuming that $Fix(T) \neq \emptyset$, we have that for any $x \in X$, the Picard iteration of $T$ starting with $x$ $\Delta$-converges to a fixed point of $T$.
\end{theorem}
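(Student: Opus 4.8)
The plan is to run the standard argument for $\Delta$-convergence of Fej\'er-monotone Picard iterations in complete CAT(0) spaces, the only non-routine ingredient being a demiclosedness statement for $T=T_2\circ T_1$. Fix $x\in X$ and write $x_n:=T^nx$. First I would record two preliminaries. (i) Property $(P_2)$ already implies nonexpansivity: substituting $(x,y,z,w)\mapsto(Tx,x,y,Ty)$ into the quadrilateral inequality~\eqref{cat0-easy} gives $d^2(y,Tx)+d^2(x,Ty)-d^2(x,Tx)-d^2(y,Ty)\le d^2(x,y)+d^2(Tx,Ty)$, and inserting this into the right-hand side of~\eqref{p2} yields $d^2(Tx,Ty)\le d^2(x,y)$; hence $T$ is nonexpansive. (ii) Consequently, for any $p\in Fix(T)$ we have $d(x_{n+1},p)=d(Tx_n,Tp)\le d(x_n,p)$, so $(d(x_n,p))_n$ is nonincreasing, hence convergent, and $(x_n)$ is bounded. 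Finally, since $Tx_n=x_{n+1}$, Theorem~\ref{th-comp} gives $d(x_n,Tx_n)=d(x_n,x_{n+1})\to 0$.

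Next I would prove the key lemma: the asymptotic center of any subsequence of $(x_n)$ lies in $Fix(T)$. Let $(x_{n_k})_k$ be a subsequence and $\{w\}=A((x_{n_k}))$ (well-defined by \cite[Proposition~7]{DhoKirSim06}). Using asymptotic regularity and then nonexpansivity of $T$,
$$\limsup_{k\to\infty}d(x_{n_k},Tw)\le\limsup_{k\to\infty}\big(d(x_{n_k},Tx_{n_k})+d(Tx_{n_k},Tw)\big)\le\limsup_{k\to\infty}d(x_{n_k},w)=r((x_{n_k})),$$
so $Tw$ realizes the asymptotic radius of $(x_{n_k})$, whence $Tw=w$ by uniqueness of the asymptotic center. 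In particular, by (ii), $\lim_n d(x_n,w)$ exists for each such $w$.

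Finally I would assemble these into $\Delta$-convergence. Let $\{u\}=A((x_n))$; it suffices to show every subsequence of $(x_n)$ has asymptotic center $u$, for then $(x_n)$ $\Delta$-converges to $u$, and $u\in Fix(T)$ by the previous step. So let $(x_{n_k})$ be a subsequence, $\{w\}=A((x_{n_k}))$, and $\rho:=\lim_n d(x_n,w)$ (which exists by the previous step). Since a subsequential $\limsup$ is dominated by the $\limsup$ of the whole sequence, $r((x_{n_k}))\le\limsup_k d(x_{n_k},u)\le r((x_n))$; conversely, $r((x_{n_k}))=\limsup_k d(x_{n_k},w)=\rho=\limsup_n d(x_n,w)\ge r((x_n))$. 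Hence equality holds throughout, so $\limsup_k d(x_{n_k},u)=r((x_{n_k}))$, and uniqueness of the asymptotic center of $(x_{n_k})$ forces $u=w$, as desired.

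The hard part is really the demiclosedness lemma of the second paragraph: it is the only step that uses anything about $T_1,T_2$, namely nonexpansivity of $T$ together with the asymptotic regularity supplied by Theorem~\ref{th-comp}. Everything else is the general asymptotic-center calculus available in any complete CAT(0) space; the only thing to watch there is the careful bookkeeping when passing between $(x_n)$ and its subsequences and invoking uniqueness of asymptotic centers. (Note, incidentally, that this route does not even require the existence of $\Delta$-convergent subsequences, since $\Delta$-convergence of $(x_n)$ to $u$ is extracted directly from the definition.)
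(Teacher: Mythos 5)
The paper offers no proof of this theorem to compare against: it is imported verbatim from \cite[Theorem 4.2]{AriLopNic15} and used as a black box (notably in the proof of Theorem~\ref{th-deltaconv2}). Your argument is the standard self-contained route for such results in complete CAT(0) spaces -- Fej\'er monotonicity with respect to $Fix(T)$, asymptotic regularity via Theorem~\ref{th-comp}, a demiclosedness step showing that asymptotic centers of subsequences are fixed points, and the uniqueness-of-asymptotic-centers bookkeeping -- and in substance it is correct; it is presumably close to the proof in the cited source, so what it buys here is simply a proof where the paper has none.

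One slip should be repaired in step (i): as written, you substitute into \eqref{p2} with the composition $T=T_2\circ T_1$ itself, but the hypothesis is only that $T_1$ and $T_2$ satisfy $(P_2)$, and the paper explicitly points out that these classes need not be closed under composition, so you may not assume $(P_2)$ for $T$. Fortunately you do not need it: your quadrilateral computation (substituting $(T_ix,x,y,T_iy)$ into \eqref{cat0-easy} and combining with \eqref{p2} for $T_i$) shows that each $T_i$ is nonexpansive, and nonexpansivity, unlike $(P_2)$, is preserved under composition, so $T$ is nonexpansive. With that one-line fix, everything downstream is sound: $d(x_n,p)$ is nonincreasing for $p\in Fix(T)$, so $(x_n)$ is bounded and $\lim_n d(x_n,w)$ exists for every fixed point $w$; Theorem~\ref{th-comp} applies under exactly the stated hypotheses and gives $d(x_n,Tx_n)\to 0$; the estimate $\limsup_k d(x_{n_k},Tw)\le\limsup_k d(x_{n_k},w)$ plus uniqueness of asymptotic centers gives $Tw=w$; and the final comparison of asymptotic radii correctly forces every subsequence to have asymptotic center $u\in Fix(T)$, which is precisely the paper's definition of $\Delta$-convergence.
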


The primary application of this kind of algorithm is the case where the two mappings are the projection operators on two closed, convex, nonempty subsets $A$ and $B$ of the space $X$. Let us first fix some ideas. Define the distance between $A$ and $B$ as
$$d(A,B):=\inf_{(x,y) \in A \times B} d(x,y),$$
and denote by $S_{A,B}$ the set of all pairs $(x^*,y^*)$ such that $d(x^*,y^*)=d(A,B)$, which are called {\it best approximation pairs}. It is immediate that if $(a,b) \in S_{A,B}$ then $b = P_Ba$ and $a = P_Ab$; in particular, if $S_{A,B} \neq \emptyset$ then $Fix(P_A \circ P_B)$ is also nonempty. It is those best approximation pairs that one aims to find, as the following corollary to the result above shows us.

\begin{corollary}[cf. {\cite[Corollary 4.3]{AriLopNic15}}]\label{cor-deltaconv}
Let $(X,d)$ be a complete CAT(0) space and let $A$ and $B$ be two convex, closed, nonempty subsets of $X$. Set $T:= P_A \circ P_B$. Assuming that $S_{A,B} \neq \emptyset$, we have that for any $x \in X$ there is a pair $(a,b) \in S_{A,B}$ -- and therefore $b=P_Ba$ -- such that the Picard iteration of $T$ starting with $x$ $\Delta$-converges to the point $a$.
\end{corollary}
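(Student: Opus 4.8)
The plan is to derive Corollary~\ref{cor-deltaconv} directly from Theorem~\ref{th-deltaconv} applied to the mappings $T_1 := P_B$ and $T_2 := P_A$. First I would note that, by the discussion preceding the statement, the metric projections $P_A, P_B : X \to X$ are firmly nonexpansive and hence satisfy property $(P_2)$, so Theorem~\ref{th-deltaconv} is applicable to $T = P_A \circ P_B$ provided $Fix(T) \neq \emptyset$. The hypothesis $S_{A,B} \neq \emptyset$ supplies this: if $(a_0,b_0) \in S_{A,B}$ then, as observed in the text, $b_0 = P_B a_0$ and $a_0 = P_A b_0$, whence $T a_0 = P_A(P_B a_0) = P_A b_0 = a_0$, so $a_0 \in Fix(T)$. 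Thus Theorem~\ref{th-deltaconv} yields, for any starting point $x \in X$, a point $a \in Fix(T)$ such that the Picard iteration $(T^n x)$ $\Delta$-converges to $a$.

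It remains to produce the companion point $b$ and show $(a,b) \in S_{A,B}$ with $b = P_B a$. The natural candidate is $b := P_B a$. Since $a \in Fix(T)$, i.e. $P_A(P_B a) = a$, we get $P_A b = a$; so $a$ is the metric projection of $b$ onto $A$ and $b$ is the metric projection of $a$ onto $B$. The key step is then to verify $d(a,b) = d(A,B)$. For this I would invoke the characterization of best approximation pairs: it suffices to check that $(a,b)$ attains the infimum defining $d(A,B)$, and the standard way to see this in the CAT(0) setting is that, having started from a genuine best approximation pair $(a_0,b_0)$ already in the space, the displacement $d(T^n x, T^{n+1} x)$ converges (by Theorem~\ref{th-comp}, the asymptotic regularity of $T$) to zero while the quantity $d(P_B(T^n x), T^n x)$ is monotone and bounded below by $d(A,B)$; passing to the $\Delta$-convergent subsequence and using lower semicontinuity of the distance-to-the-set functions along $\Delta$-convergent sequences forces $d(a, P_B a) = d(A,B)$. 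Alternatively, one can argue more cleanly: $(P_A \circ P_B)$ asymptotic regularity gives $d(P_B(T^n x), P_A(P_B(T^n x))) \to 0$, and since both $d(\cdot, A)$ and $d(\cdot, B)$ are constant (equal to $d(A,B)$) along any best approximation pair and the relevant functionals are $\Delta$-lower semicontinuous, the limit point $a$ must satisfy $d(a,b) = d(A,B)$ with $b = P_B a$.

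I expect the main obstacle to be precisely this last verification that $d(a,b) = d(A,B)$ rather than merely $b = P_B a$ and $a = P_A b$ (the latter pair of identities alone does not force $(a,b)$ to be a best approximation pair — it only makes them a pair of mutual projections, i.e. a fixed point of the alternating scheme, which could a priori sit at a larger distance if $S_{A,B}$ were empty). The hypothesis $S_{A,B} \neq \emptyset$ is what rules this out, and the cleanest route is to observe that along the Picard iteration the sequence $\bigl(d(P_B(T^n x), T^n x)\bigr)$ is nonincreasing with limit $d(A,B)$ — this uses the nonexpansiveness of the projections together with the existence of a genuine best approximation pair as a ``floor'' — and then to transfer this limiting value to $a$ via the $\Delta$-lower semicontinuity of $y \mapsto d(y, P_B y)$ along the subsequence realizing the asymptotic center. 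Everything else is bookkeeping: unwinding the definition of $S_{A,B}$, recording that $b = P_B a$ follows immediately from $a \in Fix(P_A \circ P_B)$, and restating the conclusion of Theorem~\ref{th-deltaconv} in the projection language.
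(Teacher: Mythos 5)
This corollary is quoted by the paper from \cite[Corollary 4.3]{AriLopNic15} without proof, so there is no in-paper argument to compare against; judging your proposal on its own merits, the reduction is set up correctly but has a genuine gap at exactly the point you flag. The first half is fine: $P_A,P_B$ are firmly nonexpansive, hence satisfy $(P_2)$; a pair $(a_0,b_0)\in S_{A,B}$ gives $a_0\in Fix(T)$, so Theorem~\ref{th-deltaconv} applies and yields $a\in Fix(T)$ with $T^nx$ $\Delta$-converging to $a$, and $b:=P_Ba$ satisfies $P_Ab=a$. The gap is the claim that $d(a,b)=d(A,B)$. Your justification is that $d(T^nx,P_BT^nx)$ is nonincreasing and bounded below by $d(A,B)$ ``using the existence of a best approximation pair as a floor''; but monotonicity plus that lower bound only gives a limit $\geq d(A,B)$, not equality. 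The statement that the limit actually equals $d(A,B)$ is precisely \cite[Corollary 3.3]{AriLopNic15} (quoted here in quantitative form as Corollary~\ref{quant-best-approx}) and requires a Fej\'er-type estimate; you neither prove it nor cite it, and your alternative route (asymptotic regularity plus $\Delta$-lower semicontinuity of $y\mapsto d(y,B)$) has the same missing ingredient, since asymptotic regularity alone does not force $d(T^nx,B)\to d(A,B)$. The $\Delta$-lsc step itself is fine but also needs a reference (distance to a convex set is convex and continuous, and such functions are $\Delta$-lsc in complete CAT(0) spaces).

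Two ways to close the gap. Easiest: invoke Corollary~\ref{quant-best-approx} with starting point $a$ itself; since the Picard iteration from a fixed point is constant, it gives $d^2(a,P_Ba)\leq d^2(A,B)+\eps$ for every $\eps>0$, hence $d(a,P_Ba)=d(A,B)$, and no semicontinuity argument is needed at all. Alternatively, one can prove directly that any $a\in Fix(P_A\circ P_B)$ satisfies $d(a,P_Ba)=d(A,B)$ once $S_{A,B}\neq\emptyset$: writing $b=P_Ba$ and using the standard CAT(0) projection inequality $d^2(x,z)\geq d^2(x,P_Cx)+d^2(P_Cx,z)$ for $z\in C$ at the four points $a,b,a_0,b_0$, summing, and comparing with the four-point inequality \eqref{cat0-easy} applied to the quadrilateral $a,b,b_0,a_0$ forces equality in all four projection inequalities, from which $d^2(a,b)=d^2(a_0,b_0)$ follows by adding the resulting identities. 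Either repair makes your outline into a complete proof; as written, the central verification is asserted rather than proved.
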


The above concrete variant of the algorithm is usually known as the {\it alternating projections method}. A related one, the {\it method of averaged projections}, will act as the focus of this paper. This method replaces the composition of projections from above with a weighted average of them, i.e. a convex combination. A classical way of proving the efficacy of this method in the usual Euclidean space is by reducing it to the first one through the replacing of the combined mapping with a concatenated transformation in a product space with the averaged metric, followed by a projection on the diagonal. This technique was recently extended to Hilbert spaces in \cite[Section 5]{BauMarMofWan12}, where it was used to prove an analogue for the abovementioned asymptotic regularity result for the inconsistent feasibility problem. (As an aside, we remark that the case of nonzero minimal displacement considered in \cite{BauMou18} falls beyond the known range of this trick.) In the analogue of the case considered before, where the fixed point set of the averaged mapping is nonempty, the corresponding result is trivial in Hilbert spaces, where any convex combination of finitely many firmly nonexpansive mappings is also firmly nonexpansive (see, e.g., \cite[Example 4.7]{BauCom17}). It is not known yet whether this holds in CAT(0) spaces either for firmly nonexpansive mappings or for mappings satisfying property $(P_2)$. Hence, it is nontrivial to ask ourselves whether the above result for two mappings continues to hold if we replace composition by convex combination, and whether the same kind of reduction technique would be also usable in this framework.

{\it The goal of this paper is to extend that line of argument to self-mappings satisfying property $(P_2)$ -- in particular, to firmly nonexpansive self-mappings -- in the setting of CAT(0) spaces.}

To that degree, in Section~\ref{sec:cc} we will introduce the needed construction -- a distance function $d_\lambda$ on the Cartesian square of a CAT(0) space that makes it into a new CAT(0) space which may be regarded as a weighted average. We shall prove that it has the required properties that allow it to behave well with respect to standard constructions regarding this kind of iterative method. In addition, we show how the best approximation pairs and the $\Delta$-convergent sequences in the averaged space relate to those in the original one. In Section~\ref{sec:mr}, we apply these lemmas to obtain the analogues of the results above. Theorem~\ref{main} establishes the asymptotic regularity of the averaged mapping, while Theorem~\ref{th-deltaconv2} yields its $\Delta$-convergence to a fixed point of it. In particular, Corollary~\ref{cor-deltaconv2} shows that in the specific case of averaged projections, the iteration $\Delta$-converges to the convex combination, by the same parameter $\lambda$, of a best approximation pair.

In addition, we will also establish effective versions of such asymptotic results, in the sense of proof mining. Proof mining is an applied subfield of mathematical logic, developed primarily by U. Kohlenbach and his collaborators (\cite{Koh08} is the standard introduction, while more recent surveys are \cite{Koh17, Koh18}), that aims to provide quantitative information (witnesses and bounds) for numerical entities which are shown to exist by proofs which cannot be necessarily said to be fully constructive. In nonlinear analysis, which has been a primary focus for such work, the relevant information is usually found within convergence statements, where the problem is to find an explicit formula for the $N_\eps$ such that for any $\eps > 0$, the elements of the sequence of index greater than $N_\eps$ are $\eps$-close to the limit. Theorem~\ref{th-comp} above is an instance of this, as asymptotic regularity is clearly a statement of convergence, and it has indeed been analyzed from this viewpoint in \cite{KohLopNic17}, yielding the following quantitative version.

\begin{theorem}[cf. {\cite[Theorem 3.1]{KohLopNic17}}]\label{th-comp-rate}
Let $(X,d)$ be a CAT(0) space and let $T_1, T_2 : X \to X$ satisfy property $(P_2)$. Put $T: = T_2 \circ T_1$. Let $p \in Fix(T)$ and $b > 0$. Define, for any $\eps>0$,
$$k_b(\eps):= \left\lceil \frac{2b}{\eps} \right \rceil, \qquad \Phi_b(\eps):=k_b(\eps) \cdot \left\lceil \frac{2b(1+2^{k_b(\eps)})}{\eps} \right \rceil + 1.$$
Then, for any $x \in X$ with $d(x,p) \leq b$, we have that
$$\forall \eps > 0\ \forall n \geq \Phi_b(\eps)\ d(T^nx, T^{n+1}x) \leq \eps.$$
\end{theorem}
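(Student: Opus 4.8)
The plan is to run the proof-mining programme on the qualitative argument behind Theorem~\ref{th-comp} and extract an explicit rate. Throughout I write $x_n := T^n x$, $y_n := T_1 x_n$, $q := T_1 p$ (so $T_2 q = p$), $a_n := d(x_n,p)$ and $c_n := d(x_n,x_{n+1})$; the task is to bound, for given $\eps$, the first index from which all $c_n \le \eps$.

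First I would isolate the two monotonicity facts, which need no rate, only their bare truth. Substituting \eqref{p2} into the CAT(0) inequality \eqref{cat0-easy} applied to the quadruple $(x,Tx,Ty,y)$ yields $d^2(Tx,Ty) \le d^2(x,y)$, so every $(P_2)$-mapping — hence also $T = T_2 \circ T_1$ — is nonexpansive; thus $(c_n)$ is nonincreasing with $c_0 = d(x,Tx) \le 2d(x,p) \le 2b$. Since $Tp = p$, nonexpansivity of $T$ also gives that $(a_n)$ is nonincreasing in $[0,b]$.

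Next comes the analytic core: applying $(P_2)$ to $T_1$ on the pair $(x_n,p)$ and to $T_2$ on the pair $(y_n,q)$, and combining the two resulting inequalities with \eqref{cat0-easy}, one derives a one-step estimate controlling a quadratic functional of $c_n$ by the decrease $a_n - a_{n+1}$ of the Fej\'er-type sequence, with a coupling back to $c_{n-1}$. The delicate point is that the most natural ways of eliminating the mixed terms $d(x_n,q)$, $d(y_n,p)$, $d(x_{n+1},q)$ through \eqref{cat0-easy} are too wasteful and merely reproduce the monotonicity $a_{n+1}\le a_n$ with no extra content; one must retain the genuinely nontrivial ``defect of the triangle inequality at $y_n$'' contribution, e.g. by passing to the midpoint of $x_n$ and $x_{n+1}$ and invoking the CAT(0) convexity inequality for $t=1/2$. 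This is precisely the slack that forces $c_n \to 0$ rather than just $(c_n)$ convergent.

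Finally I would feed this recurrence into an elementary quantitative lemma on sequences. Because $(a_n)$ is nonincreasing in $[0,b]$, it can decrease by more than $\eps/2$ at most $k_b(\eps) = \lceil 2b/\eps\rceil$ times; splitting the index axis into $k_b(\eps)$ consecutive blocks of length $\lceil 2b(1+2^{k_b(\eps)})/\eps\rceil$ and using this, a pigeonhole argument locates a block on which $(a_n)$ is essentially flat, and there the coupling recurrence drives $c_n$ down to $\eps$; the iterated nature of this extraction — the analysis within one block feeding the next — is what produces the factor $1+2^{k_b(\eps)}$ and the outer product in $\Phi_b(\eps)$, after which monotonicity of $(c_n)$ propagates $c_n \le \eps$ to all $n \ge \Phi_b(\eps)$. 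I expect the main obstacles to be exactly these last two steps: choosing the $(P_2)$/CAT(0) manipulations so as not to destroy the crucial slack, and then doing the bookkeeping on block lengths and constants carefully enough that the stated closed form for $\Phi_b$ falls out; everything else is routine.
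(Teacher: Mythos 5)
The paper itself contains no proof of Theorem~\ref{th-comp-rate}: the statement, together with the explicit rate $\Phi_b$, is imported verbatim from \cite{KohLopNic17} and used as a black box in Proposition~\ref{main-rate}. So there is no internal argument to compare yours against; what you are offering is a re-derivation of the cited result, and it has to be judged on its own completeness.

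Judged that way, it has genuine gaps at precisely the two points you flag as ``expected obstacles''. The preparatory part is fine: $(P_2)$ combined with \eqref{cat0-easy} does give nonexpansivity of $T_1,T_2$ and hence of $T$, so $d(T^nx,T^{n+1}x)$ is nonincreasing and bounded by $2b$, and $d(T^nx,p)$ is nonincreasing; your diagnosis that the naive combination of the two $(P_2)$ instances with \eqref{cat0-easy} only reproduces Fej\'er monotonicity is also correct. But the analytic core is never established: you describe the desired one-step ``slack'' inequality only qualitatively, and the proposed repair --- passing to the midpoint $m_n$ of $x_n$ and $x_{n+1}$ and using the CAT(0) convexity inequality at $t=1/2$ --- is left unverified; in particular it is unclear what lower bound on $d(y_n,m_n)$ you would invoke, since that device works for projections (where $m_n$ lies in the convex set $A$) but $T_1,T_2$ here are arbitrary $(P_2)$ mappings with no convex set to exploit. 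The final step is likewise asserted rather than proved: a pigeonhole over $k_b(\eps)=\lceil 2b/\eps\rceil$ blocks explains a bound of the shape $k_b(\eps)\cdot(\text{block length})+1$, but the specific factor $1+2^{k_b(\eps)}$ in the block length is attributed only to ``the iterated nature of this extraction'', with no recurrence written down from which it would follow, so nothing in the sketch pins down the stated closed form of $\Phi_b$. Since the one-step estimate and the quantitative bookkeeping are exactly the content of the theorem, the proposal as it stands is a plan rather than a proof; completing it would essentially amount to reproducing the quantitative analysis in \cite{KohLopNic17} of the argument of \cite{AriLopNic15}.
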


In the above, $\Phi_b(\eps)$, being the formula for $N_\eps$, is called a {\it rate of asymptotic regularity}. A feature that it exhibits and that is characteristic to the products of proof mining is that it is highly uniform, i.e. it is independent of parameters like the space or mapping involved. Such a rate may be deemed useful as one needs to know how many times should the algorithm be iterated in order to obtain an approximate fixed point within some pre-specified precision. The proof of corresponding result for the convex combination case, Proposition~\ref{main-rate}, shows that the rate is left unchanged during the reduction procedure.

In contrast, the special case of averaged projections is a bit more involved. In the composition case, the required information is how close the current step of the iteration is to being a best approximation pair, i.e. how close the distance between it and the projection on the second set is to the distance between the two given sets. This problem had been already analyzed in \cite{AriLopNic15}, where the following result is implicit.

\begin{corollary}[cf. {\cite[Corollary 3.3 and the discussion following it]{AriLopNic15}}]\label{quant-best-approx}
Let $(X,d)$ be a CAT(0) space and let $A$ and $B$ be two convex, closed, nonempty subsets of $X$. Set $T:=P_A \circ P_B$ and
$$q:=d^2(A,B) = \inf_{(x,y) \in A \times B} d^2(x,y).$$
Assume that there is a pair $(x^*,y^*) \in A \times B$ with $d^2(x^*,y^*) = q$. Let $M,b >0$.

Then, for any $x \in X$ with $d(x,x^*) \leq M$ and $d^2(P_AP_Bx,P_Bx)\leq b$, we have that
$$\forall \eps > 0\ \forall n \geq \left(\left\lfloor \frac{4M^2b}{\eps^2} \right\rfloor + 2\right)\ d^2(T^nx, P_BT^nx) \leq q+ \eps.$$
\end{corollary}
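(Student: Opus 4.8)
The plan is to analyze the sequence $s_n := d^2(T^n x, P_B T^n x)$ and show it converges to $q$ at rate $O(1/\sqrt n)$, with the constants $M$ and $b$. Write $u_n := T^n x$ and $v_n := P_B u_n$, so that $u_{n+1} = P_A v_n$; for $n \geq 1$ we have $u_n \in A$ and $v_n \in B$, hence $s_n = d^2(u_n, B) \geq d^2(A,B) = q$. Since $(x^*,y^*)$ is a best approximation pair, $y^* = P_B x^*$ and $x^* = P_A y^*$, so $x^*$ is a fixed point of $T$; as $T = P_A \circ P_B$ is a composition of firmly nonexpansive (hence nonexpansive) maps, $d(u_n, x^*) \leq d(x,x^*) \leq M$ for all $n$ and $d(u_n,u_{n+1})$ is non-increasing in $n$. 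Finally, since $v_0 = P_B x \in B$ and $v_1 = P_B u_1$, we get $s_1 \leq d^2(u_1, v_0) = d^2(P_A P_B x, P_B x) \leq b$.

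The core of the argument rests on two quantitative estimates. The first is a telescoping descent inequality. Since $P_A$ and $P_B$ are firmly nonexpansive, they satisfy property $(P_2)$, which yields, for any closed convex $C$ and $c \in C$, the Pythagorean-type bound $d^2(w,c) \geq d^2(w, P_C w) + d^2(P_C w, c)$ (take $y=c$ in \eqref{p2} applied to $P_C$). Applying this to $u_{n+1} = P_A v_n$ with $c = u_n$ and to $v_{n+1} = P_B u_{n+1}$ with $c = v_n$, adding the two and cancelling $d^2(u_{n+1}, v_n)$, I obtain for $n \geq 1$ that $s_n - s_{n+1} \geq d^2(u_n, u_{n+1}) + d^2(v_n, v_{n+1})$; in particular $(s_n)_{n \geq 1}$ is non-increasing, so $q \leq s_n \leq b$ throughout. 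Summing from $1$ to $n$ and using that $d(u_k, u_{k+1})$ is non-increasing gives $n\, d^2(u_n, u_{n+1}) \leq \sum_{k=1}^n d^2(u_k, u_{k+1}) \leq s_1 - q \leq b$, i.e. $d^2(T^n x, T^{n+1} x) \leq b/n$. The second estimate bounds the distance of $s_n$ to the optimum, in the form $s_n - q \leq 2M\, d(u_n, u_{n+1})$ for $n \geq 1$ — or, what already suffices and follows from the descent inequality, $(s_n - q)^2 \leq 4M^2(s_n - s_{n+1})$. I would prove it by comparing the triple $u_n, v_n, u_{n+1}$ with the best approximation pair: via the projection inequalities for $P_B$ at $u_n$ and for $P_A$ at $v_n$, the identities $x^* = P_A y^*$ and $y^* = P_B x^*$ (equivalently, that $x^*$ realizes $d(A,B) = \inf_{a \in A} d(a, B)$ and so minimizes the convex function $d(\cdot, B)$ over $A$, while the step $u_n \mapsto u_{n+1}$ performs at least the descent of the geodesic move from $u_n$ towards $x^*$), the quadrilateral inequality \eqref{cat0-easy}, and the triangle inequality, and finally converting the resulting estimate for $\sqrt{s_n} - \sqrt q$ into one for $s_n - q = (\sqrt{s_n} - \sqrt q)(\sqrt{s_n}+\sqrt q)$ using $\sqrt{s_n}+\sqrt q \leq 2\sqrt b$ and $d(u_n, x^*) \leq M$.

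Putting the two estimates together gives $s_n - q \leq 2M\sqrt{b/n}$ for every $n \geq 1$. Hence, as soon as $n \geq \lfloor 4M^2 b/\eps^2 \rfloor + 2$, we have $n > 4M^2 b/\eps^2$, so that $2M\sqrt{b/n} < \eps$, and therefore $d^2(T^n x, P_B T^n x) = s_n < q + \eps$, as claimed; the ``$+2$'' simply absorbs the integer rounding and the requirement $n \geq 1$.

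I expect the second estimate to be the main obstacle. In a Hilbert space one would invoke the descent lemma for the $1$-smooth convex function $\tfrac12 d^2(\cdot, B)$ together with Fejér monotonicity and obtain the sharper bound $s_n - q = O(M^2/n)$; in a CAT(0) space there is no inner product, and the ``cross'' distances $d(u_n, y^*)$ and $d(v_n, x^*)$ — which need not tend to $0$ along the iteration — must be handled solely through \eqref{cat0-easy}, property $(P_2)$ and the best-approximation identities. This forces the weaker, $O(M\sqrt b/\sqrt n)$-type estimate, which is exactly what produces the $\eps^{-2}$ dependence and the extra factor $b$ in the stated rate rather than the $\eps^{-1}$ one would get in the linear setting.
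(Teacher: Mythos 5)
The paper itself does not prove this corollary: it is imported verbatim (as background, ``cf.\ \cite[Corollary 3.3 and the discussion following it]{AriLopNic15}''), so the only meaningful comparison is with that source. The first half of your argument is correct and well organized: $y^*=P_Bx^*$, $x^*=P_Ay^*$, so $x^*\in Fix(T)$ and $d(u_n,x^*)\le M$ by nonexpansiveness; $s_1\le d^2(P_AP_Bx,P_Bx)\le b$; the Pythagorean-type inequality $d^2(w,c)\ge d^2(w,P_Cw)+d^2(P_Cw,c)$ does follow from \eqref{p2}; adding its two instances gives the descent inequality $s_n-s_{n+1}\ge d^2(u_n,u_{n+1})+d^2(v_n,v_{n+1})$, hence $q\le s_{n+1}\le s_n\le b$ and, with the monotonicity of $d(u_n,u_{n+1})$, the step-size bound $d^2(T^nx,T^{n+1}x)\le b/n$. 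The bookkeeping at the end (rounding, monotonicity of $s_n$) is also fine.

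The genuine gap is exactly where you yourself point: the ``second estimate'' $s_n-q\le 2M\,d(u_n,u_{n+1})$, or its weaker form $(s_n-q)^2\le 4M^2(s_n-s_{n+1})$, is asserted with a list of tools but never proved, and this estimate \emph{is} the entire quantitative content of the corollary. It is not a routine consequence of the tools you name: implementing your sketch in the obvious way (triangle inequality, the variational/projection inequalities for $P_A$ and $P_B$, the best-approximation identities) already in Hilbert space produces a constant of the order $2(M+\sqrt b)$ rather than $2M$, which yields a threshold $\sim 4(M+\sqrt b)^2b/\eps^2$ and thus does not reproduce the stated bound $\lfloor 4M^2b/\eps^2\rfloor+2$. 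The argument that does deliver the stated constants measures the gap against the Fej\'er-monotone quantity $d^2(u_n,x^*)$ instead of against the decrease of $s_n$: one proves $(\sqrt{s_n}-\sqrt q\,)^2\le d^2(u_n,x^*)-d^2(u_{n+1},x^*)$ (in Hilbert space this is the strengthened firm-nonexpansiveness inequality for $P_B$ at $u_n,x^*$ together with $d(u_{n+1},x^*)\le d(v_n,y^*)$), multiplies by $(\sqrt{s_n}+\sqrt q\,)^2\le 4b$, and sums, the total Fej\'er decrease being at most $M^2$. Transferring this to CAT(0) via \eqref{p2} is precisely the nontrivial step: the quadrilateral inequality \eqref{cat0-easy}, the natural substitute for expanding inner products, is too lossy here -- applied directly to the $(P_2)$ inequality for $P_B$ at $u_n,x^*$ it returns only $d(v_n,y^*)\le d(u_n,x^*)$ with the gap term cancelled -- so the two $(P_2)$ inequalities must be combined more delicately, which is what the cited proof in \cite{AriLopNic15} (and its quantitative analysis in \cite{KohLopNic17}) supplies. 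Until you prove your second estimate with the claimed constant, or replace it by the Fej\'er-type inequality above, the argument is incomplete at its decisive step.
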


In order to directly apply the above result to obtain the analogue one for convex combinations, we shall need an additional preliminary lemma. Specifically, one not only needs to relate best approximation pairs in the averaged space to the ones in the original space, but also approximately-close ones. The good news is that techniques of proof mining allow us to analyze a set inclusion in order to derive its approximate version. This is what we do in Lemma~\ref{quant-incl}, and this is what we apply in the proof of Corollary~\ref{rate-proj} to obtain our desired rate.

\section{The convex combination of two CAT(0) spaces}\label{sec:cc}

As announced in the Introduction, we shall now define and establish some preliminary results for the convex combination metric on $X \times X$, an analogue of the convex combination Hilbert space structure introduced in \cite{BauMarMofWan12}.

If $(X,d)$ is a metric space and $\lambda \in (0,1)$ we introduce the function $d_\lambda : X^2 \times X^2 \to \mathbb{R}_+$, defined for any $(x_1,x_2), (y_1,y_2) \in X^2$ by:
$$d_\lambda((x_1,x_2),(y_1,y_2)) := \sqrt{(1-\lambda)d^2(x_1,y_1) + \lambda d^2(x_2,y_2)}.$$

\begin{proposition}
Let $(X,d)$ be a metric space and $\lambda \in (0,1)$. Then:
\begin{enumerate}[(i)]
\item $(X^2,d_\lambda)$ is a metric space;
\item if $(X,d)$ is complete, then $(X^2,d_\lambda)$ is also complete;
\item if $(X,d)$ is a geodesic space, then $(X^2,d_\lambda)$ is also geodesic;
\item if $(X,d)$ is a CAT(0) space, then $(X^2,d_\lambda)$ is also CAT(0).
\end{enumerate}
\end{proposition}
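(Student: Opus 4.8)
The plan is to verify the four items in order, since each builds on the previous ones. For (i), I would check the metric axioms directly: nonnegativity, symmetry, and $d_\lambda((x_1,x_2),(y_1,y_2))=0 \iff (x_1,x_2)=(y_1,y_2)$ are immediate from the corresponding properties of $d$ and the fact that $\lambda, 1-\lambda > 0$. The triangle inequality requires a small argument: writing $d_\lambda$ as a weighted $\ell^2$-combination, one can view the map $(x,y) \mapsto (\sqrt{1-\lambda}\,d(x_1,y_1), \sqrt{\lambda}\,d(x_2,y_2)) \in \mathbb{R}^2_+$ and combine the triangle inequality for each coordinate of $d$ with the triangle inequality of the Euclidean norm on $\mathbb{R}^2$ (equivalently, Minkowski's inequality); this is routine. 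For (ii), completeness follows because a sequence $((x_1^n,x_2^n))_n$ is $d_\lambda$-Cauchy if and only if both coordinate sequences are $d$-Cauchy (using that $\lambda$ and $1-\lambda$ are bounded away from $0$), so one takes the coordinatewise limits and checks convergence in $d_\lambda$.

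For (iii), the natural guess is that geodesics in $(X^2,d_\lambda)$ are exactly the coordinatewise geodesics: given $(x_1,x_2)$ and $(y_1,y_2)$, define $\gamma(t) := ((1-t)x_1+ty_1,\ (1-t)x_2+ty_2)$ using the geodesics in $X$. One then checks that $d_\lambda(\gamma(t),\gamma(t')) = \sqrt{(1-\lambda)|t-t'|^2d^2(x_1,y_1)+\lambda|t-t'|^2d^2(x_2,y_2)} = |t-t'|\,d_\lambda((x_1,x_2),(y_1,y_2))$, which is exactly the geodesic condition. For (iv), I would use the characterization \eqref{cat0-easy}: one must show that for all $P,Q,R,S \in X^2$,
$$d_\lambda^2(P,R) + d_\lambda^2(Q,S) \leq d_\lambda^2(P,Q) + d_\lambda^2(Q,R) + d_\lambda^2(R,S) + d_\lambda^2(S,P).$$
Writing $P=(p_1,p_2)$ and so on, each $d_\lambda^2$ term expands as $(1-\lambda)$ times a squared $d$-distance between first coordinates plus $\lambda$ times the squared $d$-distance between second coordinates. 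So the whole inequality splits, after collecting the $(1-\lambda)$-weighted and $\lambda$-weighted parts separately, into $(1-\lambda)$ times inequality \eqref{cat0-easy} applied to $p_1,q_1,r_1,s_1$ plus $\lambda$ times inequality \eqref{cat0-easy} applied to $p_2,q_2,r_2,s_2$; both hold because $(X,d)$ is CAT(0). Since we have already shown $(X^2,d_\lambda)$ is geodesic, the equivalence from \cite[Theorem 6]{BerNik08} then gives that it is CAT(0).

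The only genuinely delicate point is making sure the geodesic claim in (iii) is not just "a geodesic exists" but is compatible with what one needs later — in particular that the midpoint/convex-combination notation $(1-t)(x_1,x_2)+t(y_1,y_2)$ in $(X^2,d_\lambda)$ really is the coordinatewise convex combination. Since CAT(0) spaces are uniquely geodesic, once part (iv) is in hand the geodesic constructed in (iii) must be \emph{the} geodesic, so this compatibility is automatic; hence there is no real obstacle, only the bookkeeping of expanding the four-point inequality cleanly. I would present (iv) as the main computation and keep (i)–(iii) brief.
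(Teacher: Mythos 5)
Your proposal is correct and follows essentially the same route as the paper: coordinatewise geodesics for (iii), and for (iv) the splitting of the four-point inequality \eqref{cat0-easy} into its $(1-\lambda)$- and $\lambda$-weighted coordinate parts (which the paper leaves as ``immediate'' and you spell out), with (i) and (ii) handled by routine verification where the paper cites a reference and says ``immediate.''
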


\begin{proof}
\begin{enumerate}[(i)]
\item This is a classical result which may be found, usually as an exercise, in any basic reference on metric spaces. For example, one may first scale the metric twice, by $\sqrt{1-\lambda}$ and by $\sqrt{\lambda}$, and then consider \cite[Section 4.1, Example 3]{Kap77}.

\item This is immediate.

\item Let $(x_1,x_2)$, $(y_1,y_2) \in X^2$. Let, for each $i \in \{1,2\}$, $\gamma_i : [0,1] \to X$ be a geodesic from $x_i$ to $y_i$. Define $\gamma : [0,1] \to X^2$, for any $t \in [0,1]$, by:
$$\gamma(t) := (\gamma_1(t),\gamma_2(t)).$$
We shall now show that $\gamma$ is a geodesic. Clearly $\gamma(0) = (x_1,x_2)$ and $\gamma(1) = (y_1,y_2)$. Let now $t, t' \in [0,1]$. Then,
\begin{align*}
d_\lambda(\gamma(t),\gamma(t')) &= \sqrt{(1-\lambda) d^2(\gamma_1(t), \gamma_1(t')) + \lambda d^2(\gamma_2(t), \gamma_2(t'))} \\
&= \sqrt{(1-\lambda) (t-t')^2 d^2(x_1,y_1) + \lambda (t-t')^2 d^2(x_2,y_2)} \\
&= |t-t'| \sqrt{(1-\lambda) d^2(x_1,y_1) + \lambda d^2(x_2,y_2)} \\
&= |t-t'| d_\lambda((x_1,x_2),(y_1,y_2)).
\end{align*}

\item This is an immediate consequence of (iii) and \eqref{cat0-easy}.
\end{enumerate}
\end{proof}

The following proposition may be easily proven from the defining equation of the property $(P_2)$, namely \eqref{p2}.

\begin{proposition}\label{prop23}
Let $(X,d)$ be a metric space and $\lambda \in (0,1)$. Consider two mappings $T_1, T_2: X\to X$ that satisfy property $(P_2)$. Define a mapping $U: X^2 \to X^2$ by setting, for any $(x_1,x_2) \in X^2$,
$$U(x_1,x_2) := (T_1x_1,T_2x_2).$$
Then $U$ satisfies property $(P_2)$ with respect to $(X^2, d_\lambda)$.
\end{proposition}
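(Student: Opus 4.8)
The plan is to reduce property $(P_2)$ for $U$ on $(X^2,d_\lambda)$ directly to the two instances of property $(P_2)$ for $T_1$ and $T_2$ on $(X,d)$ supplied by the hypothesis. First I would fix arbitrary points $x=(x_1,x_2)$ and $y=(y_1,y_2)$ in $X^2$, record that $Ux=(T_1x_1,T_2x_2)$ and $Uy=(T_1y_1,T_2y_2)$, and then expand each of the five squared $d_\lambda$-distances occurring in the inequality \eqref{p2} written for $U$, using the defining identity
$$d_\lambda^2((a_1,a_2),(b_1,b_2)) = (1-\lambda)d^2(a_1,b_1) + \lambda d^2(a_2,b_2).$$

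The key observation is that, after this expansion, the inequality \eqref{p2} for $U$ splits cleanly along the two coordinates: collecting the terms carrying the factor $1-\lambda$ yields exactly $(1-\lambda)$ times the $(P_2)$-inequality \eqref{p2} for $T_1$ evaluated at the pair $(x_1,y_1)$, while the terms carrying the factor $\lambda$ yield $\lambda$ times the $(P_2)$-inequality \eqref{p2} for $T_2$ evaluated at $(x_2,y_2)$. Both of these inequalities hold by assumption, and since $\lambda\in(0,1)$ the coefficients $1-\lambda$ and $\lambda$ are nonnegative; adding the two scaled inequalities therefore gives precisely the instance of \eqref{p2} for $U$ at $(x,y)$. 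As $x,y$ were arbitrary, this establishes the proposition.

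There is essentially no genuine obstacle here — the statement is a routine separation-of-variables verification. The only point requiring a modicum of care is the bookkeeping: one must match each of the five terms of \eqref{p2} for $U$ with the corresponding term of the $T_1$- and $T_2$-instances, and note that the signs line up so that what is being added is a genuine nonnegative combination of valid inequalities rather than a difference.
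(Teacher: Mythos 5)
Your proof is correct: expanding each $d_\lambda^2$-term coordinate-wise and adding $(1-\lambda)$ times the $(P_2)$-inequality for $T_1$ at $(x_1,y_1)$ to $\lambda$ times the one for $T_2$ at $(x_2,y_2)$ gives exactly \eqref{p2} for $U$. This is precisely the routine verification the paper has in mind when it states that the proposition ``may be easily proven from the defining equation of the property $(P_2)$'' and omits the details.
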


The other mapping that will be used in our application of Theorem~\ref{th-comp} is provided by the next proposition.

\begin{proposition}\label{prop24}
Let $(X,d)$ be a CAT(0) space and $\lambda \in (0,1)$. Consider the set
$$\Delta_X := \{(x,x) \mid x \in X\} \subseteq X^2,$$
which is clearly closed and convex, and define the operator $Q : X^2 \to X^2$ by setting, for any $(x_1,x_2) \in X^2$,
$$Q(x_1,x_2) := ((1-\lambda)x_1+\lambda x_2,(1-\lambda)x_1+\lambda x_2).$$
Then $Q$ is the metric projection operator onto $\Delta_X$ in $(X^2,d_\lambda)$.
\end{proposition}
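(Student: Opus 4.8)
The plan is to verify directly, from the definition of the metric projection, that for every $(x_1,x_2)\in X^2$ the point $Q(x_1,x_2)=(z,z)$, where $z:=(1-\lambda)x_1+\lambda x_2$, realises the distance from $(x_1,x_2)$ to $\Delta_X$. Since $(X^2,d_\lambda)$ is a CAT(0) space and $\Delta_X$ is nonempty, closed and convex, the metric projection $P_{\Delta_X}(x_1,x_2)$ exists and is the unique point of $\Delta_X$ nearest to $(x_1,x_2)$ by \cite[Proposition II.2.4]{BriHae99}; hence exhibiting such a nearest point identifies it with $Q(x_1,x_2)$.

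First I would compute $d_\lambda^2\big((x_1,x_2),(z,z)\big)$. Because $z$ is the point at parameter $\lambda$ on the geodesic from $x_1$ to $x_2$, one has $d(x_1,z)=\lambda\, d(x_1,x_2)$ and $d(x_2,z)=(1-\lambda)\, d(x_1,x_2)$, so that
$$d_\lambda^2\big((x_1,x_2),(z,z)\big)=(1-\lambda)\lambda^2 d^2(x_1,x_2)+\lambda(1-\lambda)^2 d^2(x_1,x_2)=\lambda(1-\lambda)\, d^2(x_1,x_2).$$
Next, for an arbitrary point $(y,y)\in\Delta_X$ I would write $d_\lambda^2\big((x_1,x_2),(y,y)\big)=(1-\lambda)d^2(x_1,y)+\lambda d^2(x_2,y)$ and apply the defining CAT(0) inequality to the geodesic from $x_1$ to $x_2$, with the point $y$ and the parameter $t=\lambda$:
$$d^2(y,z)\leq (1-\lambda)d^2(y,x_1)+\lambda d^2(y,x_2)-\lambda(1-\lambda)d^2(x_1,x_2).$$
Since $d^2(y,z)\geq 0$, this rearranges to $(1-\lambda)d^2(x_1,y)+\lambda d^2(x_2,y)\geq \lambda(1-\lambda)d^2(x_1,x_2)=d_\lambda^2\big((x_1,x_2),(z,z)\big)$, i.e. $d_\lambda\big((x_1,x_2),Q(x_1,x_2)\big)\leq d_\lambda\big((x_1,x_2),(y,y)\big)$. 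As $(y,y)$ was arbitrary in $\Delta_X$, the point $Q(x_1,x_2)$ minimises the $d_\lambda$-distance to $\Delta_X$, and therefore equals $P_{\Delta_X}(x_1,x_2)$.

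There is no real obstacle here; the only point requiring care is the direction in which the CAT(0) inequality is used — it is the upper bound on $d^2(y,z)$, combined with the nonnegativity of $d^2(y,z)$, that produces the needed lower bound on the distance from $(x_1,x_2)$ to $\Delta_X$, matching exactly the value attained at $Q(x_1,x_2)$. (Alternatively one could derive the same inequality from the characterisation \eqref{cat0-easy} by an appropriate choice of the four points, but the geodesic form of the CAT(0) condition is the most direct route.)
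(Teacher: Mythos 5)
Your proposal is correct and follows essentially the same route as the paper's proof: compute $d_\lambda^2\bigl((x_1,x_2),(z,z)\bigr)=\lambda(1-\lambda)d^2(x_1,x_2)$, then apply the defining CAT(0) inequality at the point $y$ with parameter $\lambda$ and use nonnegativity of $d^2(y,z)$ to bound this below by $d_\lambda^2\bigl((x_1,x_2),(y,y)\bigr)$ for every $(y,y)\in\Delta_X$. The extra remark identifying the minimiser with the (unique) metric projection is exactly how the paper implicitly concludes as well.
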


\begin{proof}
What we have to show is that for any $w, x_1, x_2 \in X$,
$$d_\lambda^2((x_1,x_2),((1-\lambda)x_1+\lambda x_2,(1-\lambda)x_1+\lambda x_2)) \leq d_\lambda^2((x_1,x_2),(w,w)).$$
Let $w, x_1, x_2 \in X$. Set $c:= (1-\lambda)x_1+\lambda x_2$. By the defining property of CAT(0) spaces, we have that
$$d^2(w,c) \leq (1-\lambda)d^2(w,x_1) + \lambda d^2(w,x_2) - \lambda(1-\lambda) d^2(x_1,x_2)$$
and, therefore, since $d^2(w,c) \geq 0$,
\begin{equation}
\lambda(1-\lambda) d^2(x_1,x_2) \leq (1-\lambda)d^2(w,x_1) + \lambda d^2(w,x_2). \label{lam-unu}
\end{equation}
We may now compute:
\begin{align*}
d_\lambda^2((x_1,x_2),(c,c)) &= (1-\lambda)d^2(x_1,c) + \lambda d^2(x_2,c) \\
&= \lambda^2(1-\lambda)d^2(x_1,x_2) + \lambda(1-\lambda)^2d^2(x_1,x_2) \\
&= \lambda(1-\lambda)d^2(x_1,x_2) \\
&\leq (1-\lambda)d^2(x_1,w) + \lambda d^2(x_2,w) & \text{by \eqref{lam-unu}} \\
&= d_\lambda^2((x_1,x_2),(w,w)),
\end{align*}
which was what we needed to show.
\end{proof}

We will also need the following lemmas.

\begin{lemma}\label{fixqu}
Let $(X,d)$ be a CAT(0) space and let $T_1, T_2: X\to X$ satisfy property $(P_2)$. Let $\lambda \in(0,1)$ and put $T:=(1-\lambda)T_1 + \lambda T_2$. Define $U$ as in Proposition~\ref{prop23} and $Q$ as in Proposition~\ref{prop24}. Then:
\begin{enumerate}[(i)]
\item $Fix(Q \circ U) = \{(p,p) \mid p \in Fix(T)\}$;
\item for all $x,y \in X$, $d_\lambda((x,x),(y,y)) = d(x,y)$;
\item for all $x \in X$ and all $n \in \N$, $(Q \circ U)^n(x,x) = (T^nx,T^nx)$.
\end{enumerate}
\end{lemma}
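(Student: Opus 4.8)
The plan is to dispatch the three items largely independently by unwinding the definitions of $U$ and $Q$; item (iii) carries the main content, while items (i) and (ii) reduce to one-line computations. Item (ii) is immediate: since $(1-\lambda)+\lambda = 1$,
\[
d_\lambda((x,x),(y,y)) = \sqrt{(1-\lambda)d^2(x,y) + \lambda d^2(x,y)} = d(x,y),
\]
so the diagonal embedding $x \mapsto (x,x)$ is an isometry of $(X,d)$ into $(X^2,d_\lambda)$ — which is, morally, the reason the whole reduction works.

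For item (iii) I would induct on $n$. The base case $n=0$ reads $(x,x) = (T^0 x, T^0 x)$. For the inductive step, assuming $(Q\circ U)^n(x,x) = (T^n x, T^n x)$, I compute
\[
(Q\circ U)^{n+1}(x,x) = Q\bigl(U(T^n x, T^n x)\bigr) = Q\bigl(T_1 T^n x,\, T_2 T^n x\bigr) = \bigl(c_n, c_n\bigr),
\]
where $c_n := (1-\lambda)T_1 T^n x + \lambda T_2 T^n x$. By the very definition of $T$ as the convex combination $(1-\lambda)T_1 + \lambda T_2$ — the point at parameter $\lambda$ on the unique geodesic from $T_1(T^n x)$ to $T_2(T^n x)$ — we have $c_n = T(T^n x) = T^{n+1}x$, which closes the induction.

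For item (i), the inclusion ``$\supseteq$'' is the $n=1$ instance of the computation just made: if $p \in Fix(T)$, then $(Q\circ U)(p,p) = (Tp, Tp) = (p,p)$. For ``$\subseteq$'', I would use that $Q$ takes values in $\Delta_X$, hence so does $Q \circ U$; therefore any $(x_1,x_2) \in Fix(Q\circ U)$ satisfies $(x_1,x_2) = (Q\circ U)(x_1,x_2) \in \Delta_X$, which forces $x_1 = x_2 =: p$, and then $(p,p) = (Q\circ U)(p,p) = (Tp,Tp)$ gives $p = Tp$. There is no genuine obstacle in this lemma; the only point deserving attention is the bookkeeping in (iii), namely keeping the weights $1-\lambda$ and $\lambda$ attached to the correct coordinates and checking that this matches the convention for the formal convex combination $(1-\lambda)T_1 + \lambda T_2$ in a uniquely geodesic space.
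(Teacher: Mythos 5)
Your proposal is correct and follows essentially the same route as the paper: the paper dispatches (ii) and (iii) as immediate (by the definitions and induction) and proves (i) by exactly the same two computations, with your observation that $Q\circ U$ lands in $\Delta_X$ being just a repackaging of the paper's reading-off that both coordinates of $(Q\circ U)(x_1,x_2)$ coincide. Nothing is missing; you have merely written out the details the paper leaves implicit.
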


\begin{proof}
\begin{enumerate}[(i)]
\item Let $p$ be a fixed point of $T$. Then $(p,p)$ is a fixed point of $Q \circ U$, since
$$QU(p,p) = Q(T_1p,T_2p) =((1-\lambda)T_1p+\lambda T_2p,(1-\lambda)T_1p+\lambda T_2p) = (p,p).$$

Conversely, suppose that $(x_1,x_2)$ is a fixed point of $Q \circ U$. Then, since
$$((1-\lambda)T_1x_1 + \lambda T_2x_2, (1-\lambda)T_1x_1 + \lambda T_2x_2)=(x_1,x_2),$$
we have that $x_1 = x_2 =: p$ and therefore that $(1-\lambda)T_1p + \lambda T_2p = p$.
\item Immediate, using the definition of $d_\lambda$.
\item Immediate, by induction on $n$.
\end{enumerate}
\end{proof}

\begin{lemma}\label{quant-incl}
Let $(X,d)$ be a CAT(0) space and $\lambda \in (0,1)$. Let $\eps \geq 0$. Let $A, B$ be two subsets of $X$ and $w \in X$, $a \in A$, $b \in B$ be such that for all $x \in X$, $y \in A$ and $z \in B$,
$$d_\lambda^2((w,w),(a,b)) \leq d_\lambda^2((x,x),(y,z)) + \frac{\eps^2}4 \cdot \lambda(1-\lambda).$$
Then, for any $y \in A$, $z\in B$,
$$d(a,b) \leq d(y,z) + \eps.$$
\end{lemma}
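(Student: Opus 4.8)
The plan is to unpack the definition of $d_\lambda$ on both sides of the hypothesis and then specialize the universally quantified inequality to a single cleverly chosen triple $(x,y,z)$. Concretely, on the left-hand side we have
$$d_\lambda^2((w,w),(a,b)) = (1-\lambda)d^2(w,a) + \lambda d^2(w,b),$$
so the hypothesis reads: for all $x \in X$, $y \in A$, $z \in B$,
$$(1-\lambda)d^2(w,a) + \lambda d^2(w,b) \leq (1-\lambda)d^2(x,y) + \lambda d^2(x,z) + \tfrac{\eps^2}{4}\lambda(1-\lambda).$$
Since $a \in A$ and $b \in B$, the natural choice is to plug in $y := a$, $z := b$, and for $x$ the point that makes the right-hand side as small as possible, namely $x := (1-\lambda)a + \lambda b$ (the minimizer of $t \mapsto (1-\lambda)d^2(t,a) + \lambda d^2(t,b)$ over a CAT(0) space, which is exactly the point appearing in Proposition~\ref{prop24}). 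This is the step where the CAT(0) hypothesis genuinely enters.

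With $c := (1-\lambda)a + \lambda b$, the same computation as in the proof of Proposition~\ref{prop24} gives $(1-\lambda)d^2(c,a) + \lambda d^2(c,b) = \lambda(1-\lambda)d^2(a,b)$, so substituting $x := c$, $y := a$, $z := b$ into the hypothesis yields
$$(1-\lambda)d^2(w,a) + \lambda d^2(w,b) \leq \lambda(1-\lambda)d^2(a,b) + \tfrac{\eps^2}{4}\lambda(1-\lambda).$$
Now I would fix an arbitrary $y \in A$, $z \in B$ and apply the defining CAT(0) inequality once more, this time at the point $c' := (1-\lambda)y + \lambda z$ together with the vertex $w$: the convexity inequality $d^2(w,c') \leq (1-\lambda)d^2(w,y) + \lambda d^2(w,z) - \lambda(1-\lambda)d^2(y,z)$ rearranges (discarding $d^2(w,c') \geq 0$, as in \eqref{lam-unu}) to
$$\lambda(1-\lambda)d^2(y,z) \leq (1-\lambda)d^2(w,y) + \lambda d^2(w,z).$$
Combining this with the previous display and with the trivial triangle-inequality bounds $d(w,a) \geq d(a,b) - d(w,b)$, etc., one obtains a bound of the shape $\lambda(1-\lambda)d^2(a,b) \leq \lambda(1-\lambda)d^2(y,z) + (\text{cross terms}) + \tfrac{\eps^2}{4}\lambda(1-\lambda)$; dividing through by $\lambda(1-\lambda) > 0$ and doing a little algebra with the square roots should collapse this to $d(a,b) \leq d(y,z) + \eps$.

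The main obstacle I anticipate is the final bookkeeping: the hypothesis controls a quadratic combination of the distances from the \emph{free} point $w$, whereas the conclusion is a clean linear inequality between $d(a,b)$ and $d(y,z)$. Getting from one to the other cleanly requires relating $d(w,a)$, $d(w,b)$ to $d(a,b)$ and $d(w,y)$, $d(w,z)$ to $d(y,z)$ in a way that the $\eps^2/4$ slack becomes exactly an additive $\eps$ after taking square roots — likely this is where one uses that $d_\lambda^2((w,w),(a,b)) \geq \lambda(1-\lambda) d^2(a,b)$ is a kind of sharp estimate (mirroring \eqref{lam-unu}) so that the factor $\tfrac14$ matches the quadratic-to-linear conversion $\sqrt{u^2 + \tfrac{\eps^2}{4}} \leq u + \tfrac{\eps}{2}$ applied twice, or an inequality of the form $(d(a,b))^2 \leq (d(y,z) + \eps)^2$ obtained directly. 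I would aim to massage the two displayed inequalities into precisely $d^2(a,b) \leq \big(d(y,z)+\eps\big)^2$ and then take square roots.
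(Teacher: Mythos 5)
There is a genuine gap: the two inequalities you actually derive cannot be combined to give the conclusion, because you never apply the hypothesis to the \emph{arbitrary} pair $(y,z)$. The only instance of the hypothesis you use is at $(x,y,z):=((1-\lambda)a+\lambda b,\,a,\,b)$, which gives $(1-\lambda)d^2(w,a)+\lambda d^2(w,b)\le \lambda(1-\lambda)d^2(a,b)+\frac{\eps^2}{4}\lambda(1-\lambda)$; the arbitrary $y,z$ enter only through the convexity estimate $\lambda(1-\lambda)d^2(y,z)\le(1-\lambda)d^2(w,y)+\lambda d^2(w,z)$. Both of these point the wrong way for chaining: the first \emph{upper}-bounds the $w$-quantity by $d^2(a,b)$, the second \emph{lower}-bounds an unrelated $w$-quantity by $d^2(y,z)$, and nothing links them. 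Indeed, take $X=\mathbb{R}$, $\lambda=\tfrac12$, $a=0$, $b=2$, $w=1$, $y=z=5$: both of your displays (and the triangle inequalities and the estimate $d_\lambda^2((w,w),(a,b))\ge\lambda(1-\lambda)d^2(a,b)$ you invoke at the end) hold, yet $d(a,b)=2\not\le d(y,z)+\eps$ for small $\eps$; so no amount of ``bookkeeping'' with cross terms can close the argument from these ingredients. The cross terms are also intrinsically large: the hypothesis forces $w$ to be close to the combination $(1-\lambda)a+\lambda b$, not to $a$ or $b$, so terms like $d(w,a)$ are of the order of $d(a,b)$.

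The fix is essentially to swap the roles you assigned to the hypothesis and to the CAT(0) inequality. Instantiate the hypothesis at the arbitrary $y\in A$, $z\in B$ with $x:=v:=(1-\lambda)y+\lambda z$, so that its right-hand side becomes exactly $\lambda(1-\lambda)d^2(y,z)+\frac{\eps^2}{4}\lambda(1-\lambda)$, and lower-bound its left-hand side via the CAT(0) inequality at $u:=(1-\lambda)a+\lambda b$ (discarding $d^2(w,u)\ge 0$), which gives $(1-\lambda)d^2(w,a)+\lambda d^2(w,b)\ge\lambda(1-\lambda)d^2(a,b)$. Dividing by $\lambda(1-\lambda)$ yields $d^2(a,b)\le d^2(y,z)+\frac{\eps^2}{4}\le\bigl(d(y,z)+\eps\bigr)^2$, which is exactly the inequality you were aiming for (in fact with $\eps/2$). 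The paper argues slightly differently but with the same two instantiations: from your first display together with the CAT(0) inequality at $u$ it first extracts the claim $d^2(w,u)\le\frac{\eps^2}{4}\lambda(1-\lambda)$, then applies the hypothesis at $(v,y,z)$ and the triangle inequality for $d_\lambda$ through $(w,w)$, using $d_\lambda((u,u),(a,b))=\sqrt{\lambda(1-\lambda)}\,d(a,b)$ and $d_\lambda((v,v),(y,z))=\sqrt{\lambda(1-\lambda)}\,d(y,z)$. Either route works; what your write-up is missing is the second instantiation of the hypothesis, at $(v,y,z)$.
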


\begin{proof}
First, set $u:=(1-\lambda)a+\lambda b$.\\[1mm]

\noindent {\bf Claim:} We have that $d^2(w,u) \leq \frac{\eps^2}4 \cdot \lambda(1-\lambda)$.\\
{\bf Proof of claim:} We apply our hypothesis for $x:=u$, $y:=a$, $z:=b$ to obtain that
$$(1-\lambda)d^2(w,a) + \lambda d^2(w,b) \leq \lambda(1-\lambda) d^2(a,b) + \frac{\eps^2}4 \cdot \lambda(1-\lambda).$$
Since, by the defining property of CAT(0) spaces, we have that
$$d^2(w,u) \leq (1-\lambda)d^2(w,a) + \lambda d^2(w,b) - \lambda(1-\lambda) d^2(a,b),$$
the conclusion follows. \hfill $\blacksquare$\\[1mm]

Now let $y\in A$ and $z \in B$ and put $v:=(1-\lambda)y+\lambda z$. Applying the hypothesis, we get:
$$d_\lambda((w,w),(a,b))\leq d_\lambda((v,v),(y,z)) + \frac\eps2 \cdot \sqrt{\lambda(1-\lambda)}.$$
On the other hand,
\begin{align*}
d_\lambda((u,u),(a,b)) &\leq d_\lambda((u,u),(w,w)) + d_\lambda((w,w),(a,b)) \\
&\leq \frac\eps2 \cdot \sqrt{\lambda(1-\lambda)} + d_\lambda((v,v),(y,z)) + \frac\eps2 \cdot \sqrt{\lambda(1-\lambda)}.
\end{align*}
Therefore, we get that:
$$\sqrt{\lambda(1-\lambda)} d(a,b) \leq \sqrt{\lambda(1-\lambda)} d(y,z) + \eps\sqrt{\lambda(1-\lambda)},$$
from which we obtain our conclusion.
\end{proof}

The above lemma, as announced before, is the approximate version of one half of the following lemma, as it may be immediately seen in the latter's proof.

\begin{lemma}\label{sdelta}
Let $(X,d)$ be a CAT(0) space and $\lambda \in (0,1)$. Let $A, B$ be two subsets of $X$. Then
$$S_{\Delta_X, A \times B} = \{ (((1-\lambda)a+\lambda b,(1-\lambda)a+\lambda b),(a,b)) \mid (a,b) \in S_{A,B} \},$$
where the left hand side set is considered as being defined with respect to $(X^2,d_\lambda)$.
\end{lemma}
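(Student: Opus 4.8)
The plan is to prove the set equality $S_{\Delta_X, A\times B} = \{(((1-\lambda)a+\lambda b,(1-\lambda)a+\lambda b),(a,b)) \mid (a,b)\in S_{A,B}\}$ by a double inclusion, where throughout one keeps in mind that $d_\lambda(\Delta_X, A\times B)^2$ should equal $\lambda(1-\lambda)\,d(A,B)^2$. The key computational input is the identity established inside the proof of Proposition~\ref{prop24}: for any $x_1,x_2\in X$, writing $c:=(1-\lambda)x_1+\lambda x_2$, one has $d_\lambda^2((x_1,x_2),(c,c)) = \lambda(1-\lambda)\,d^2(x_1,x_2)$, and moreover $(c,c)=Q(x_1,x_2)$ is the metric projection of $(x_1,x_2)$ onto $\Delta_X$. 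This tells us that the distance from a point $(y,z)\in A\times B$ to the diagonal is exactly $\sqrt{\lambda(1-\lambda)}\,d(y,z)$, realized uniquely at $((1-\lambda)y+\lambda z,(1-\lambda)y+\lambda z)$.

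For the inclusion $\supseteq$: given $(a,b)\in S_{A,B}$, so $d(a,b)=d(A,B)$, set $u:=(1-\lambda)a+\lambda b$; I would show $(((u,u),(a,b))$ is a best approximation pair. For an arbitrary $((x,x),(y,z))\in \Delta_X\times(A\times B)$ we have $d_\lambda((x,x),(y,z)) \geq d_\lambda(Q(y,z),(y,z)) = \sqrt{\lambda(1-\lambda)}\,d(y,z) \geq \sqrt{\lambda(1-\lambda)}\,d(A,B) = \sqrt{\lambda(1-\lambda)}\,d(a,b) = d_\lambda((u,u),(a,b))$, where the first inequality uses that $Q(y,z)$ is the nearest point of $\Delta_X$ to $(y,z)$ and the last equality is the Proposition~\ref{prop24} identity. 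Hence $d_\lambda((u,u),(a,b)) = d_\lambda(\Delta_X, A\times B)$, so the pair lies in $S_{\Delta_X,A\times B}$.

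For the inclusion $\subseteq$: suppose $((w,w),(a,b))\in S_{\Delta_X, A\times B}$ with $w\in X$, $a\in A$, $b\in B$. This is exactly the hypothesis of Lemma~\ref{quant-incl} with $\eps=0$ (since being a best approximation pair means $d_\lambda^2((w,w),(a,b)) \leq d_\lambda^2((x,x),(y,z))$ for all admissible $x,y,z$, which is the $\eps=0$ case of that lemma's hypothesis). Applying Lemma~\ref{quant-incl} gives $d(a,b)\leq d(y,z)$ for all $y\in A$, $z\in B$, i.e. $d(a,b)=d(A,B)$, so $(a,b)\in S_{A,B}$. Moreover the Claim inside the proof of Lemma~\ref{quant-incl} with $\eps=0$ yields $d^2(w,u)\leq 0$, i.e. $w=u=(1-\lambda)a+\lambda b$. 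Thus $((w,w),(a,b))$ has exactly the required form, completing the argument. I do not expect any real obstacle here — the only thing to be careful about is the phrase in the statement that Lemma~\ref{quant-incl} is "the approximate version of one half" of this lemma, so I should present the $\subseteq$ direction so that it visibly specializes Lemma~\ref{quant-incl} (and its internal Claim) at $\eps=0$, rather than re-deriving everything, and make sure the convention that $S_{\Delta_X,A\times B}$ is taken with respect to $d_\lambda$ is used consistently.
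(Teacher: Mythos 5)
Your proposal is correct and follows essentially the same route as the paper: the inclusion $\subseteq$ is obtained exactly as in the paper by specializing Lemma~\ref{quant-incl} and its internal Claim at $\eps=0$, and the inclusion $\supseteq$ rests on the same computation $d_\lambda^2\bigl((x_1,x_2),(c,c)\bigr)=\lambda(1-\lambda)d^2(x_1,x_2)$ together with the CAT(0) inequality. The only (harmless) difference is that for $\supseteq$ you invoke Proposition~\ref{prop24} (that $Q$ is the metric projection onto $\Delta_X$) as a black box, whereas the paper re-derives the underlying inequality $\lambda(1-\lambda)d^2(y,z)\le(1-\lambda)d^2(x,y)+\lambda d^2(x,z)$ directly.
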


\begin{proof}
The forward inclusion is an immediate consequence of Lemma~\ref{quant-incl} and of the claim in its proof, by taking $\eps:=0$. We shall now prove the backward inclusion.

Take $(a,b) \in S_{A,B}$ and set $u:=(1-\lambda)a+\lambda b$.

We know that for all $y \in A$ and $z \in B$, $d(a,b) \leq d(y,z)$. Let $x \in X$, $y \in A$ and $z \in B$. We want to show that
$$d_\lambda^2((u,u),(a,b)) \leq d_\lambda^2((x,x),(y,z)),$$
i.e., that
$$\lambda(1-\lambda)d^2(a,b) \leq (1-\lambda)d^2(x,y) + \lambda d^2(x,z).$$
Set now $w:=(1-\lambda)y+\lambda z$. By the defining property of CAT(0) spaces, we have that
$$0 \leq d^2(x,w) \leq (1-\lambda) d^2(x,y) + \lambda d^2(x,z) - \lambda(1-\lambda) d^2(y,z),$$
and hence that
$$\lambda(1-\lambda) d^2(y,z) \leq (1-\lambda) d^2(x,y) + \lambda d^2(x,z).$$
Since $d(a,b) \leq d(y,z)$, our conclusion follows.
\end{proof}

Finally, we relate the notion of $\Delta$-convergence in the averaged space to the one in the original space.

\begin{lemma}\label{doubledelta}
Let $(X,d)$ be a complete CAT(0) space and $(x_n)$ be a bounded sequence in $X$. Let $u \in X$ and assume that $((x_n,x_n))$ $\Delta$-converges to $(u,u)$ in $(X^2,d_\lambda)$. Then $(x_n)$ $\Delta$-converges to $u$.
\end{lemma}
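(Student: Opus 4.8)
The plan is to reduce everything to the identity $d_\lambda((y,y),(y',y')) = d(y,y')$ from Lemma~\ref{fixqu}(ii), which says that along the diagonal $d_\lambda$ is just a faithful copy of $d$. First I would fix an arbitrary subsequence $(x_{n_k})$ of $(x_n)$ and let $c$ be its asymptotic center in $(X,d)$, which exists and is unique since $(X,d)$ is a complete CAT(0) space; set $r := r((x_{n_k})) = \limsup_k d(c,x_{n_k})$. Since $(x_{n_k})$ is arbitrary, it suffices to show $A((x_{n_k})) = \{u\}$; equivalently, since then $u$ attains the asymptotic radius, it suffices to show $u \in A((x_{n_k}))$, after which uniqueness of asymptotic centers forces $A((x_{n_k})) = \{u\}$.

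The core observation is that for every $y \in X$,
$$\limsup_{k\to\infty} d_\lambda((y,y),(x_{n_k},x_{n_k})) = \limsup_{k\to\infty} d(y,x_{n_k}),$$
which is immediate from the definition of $d_\lambda$ (the weights $1-\lambda$ and $\lambda$ sum to $1$), or directly from Lemma~\ref{fixqu}(ii). Now $((x_{n_k},x_{n_k}))$ is a subsequence of $((x_n,x_n))$, hence by hypothesis it $\Delta$-converges to $(u,u)$ in $(X^2,d_\lambda)$; in particular $(u,u)$ is an asymptotic center of it, so $\limsup_k d_\lambda((u,u),(x_{n_k},x_{n_k}))$ equals the asymptotic radius of $((x_{n_k},x_{n_k}))$, which is $\le \limsup_k d_\lambda((c,c),(x_{n_k},x_{n_k}))$ by definition of asymptotic radius. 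Applying the displayed identity with $y = u$ and with $y = c$ turns this into $\limsup_k d(u,x_{n_k}) \le \limsup_k d(c,x_{n_k}) = r$. The reverse inequality is automatic from the definition of $r$ as an infimum over $y \in X$, so $\limsup_k d(u,x_{n_k}) = r$, i.e. $u \in A((x_{n_k}))$; uniqueness then gives $A((x_{n_k})) = \{u\}$, which completes the proof.

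I do not expect a genuine obstacle here: the only delicate point is that the asymptotic center of $((x_{n_k},x_{n_k}))$ in $(X^2,d_\lambda)$ is not a priori located on the diagonal, so one cannot simply "project" the $X^2$-computation back down to $X$. The trick that sidesteps this is to use the diagonal point $(c,c)$, built from the asymptotic center of $(x_{n_k})$ in $X$, merely as a competitor in the infimum defining the asymptotic radius in $X^2$, rather than attempting to identify that center. Everything else is an unwinding of definitions together with the cited uniqueness of asymptotic centers in complete CAT(0) spaces.
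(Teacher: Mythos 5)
Your proof is correct and follows essentially the same route as the paper: exploit that $(u,u)$ is the asymptotic center of the diagonal subsequence, use the isometry $d_\lambda((y,y),(y',y'))=d(y,y')$ to transfer the limsup comparison back to $X$, and finish by uniqueness of asymptotic centers. The only cosmetic difference is that you compare against the single competitor $(c,c)$ built from the asymptotic center of $(x_{n_k})$ in $X$, whereas the paper compares against all diagonal points $(y,y)$ directly; both arguments are equally valid and of the same length.
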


\begin{proof}
Let $(x_{n_k})$ be a subsequence of $(x_n)$. We want to show that $A((x_{n_k}))=\{u\}$. By the uniqueness of asymptotic centers, it suffices to show that $u$ is an asymptotic center of $(x_{n_k})$. Since we know that $A((x_{n_k},x_{n_k}))=\{(u,u)\}$, we have that for all $y \in X$,
$$\limsup_{k \to \infty} d_\lambda((u,u),(x_{n_k},x_{n_k})) \leq \limsup_{k \to \infty} d_\lambda((y,y),(x_{n_k},x_{n_k})),$$
and hence, by Lemma~\ref{fixqu}.(ii), that for all $y \in X$,
$$\limsup_{k \to \infty}d(u,x_{n_k}) \leq \limsup_{k \to \infty} d(y,x_{n_k}),$$
which yields our conclusion.
\end{proof}

\section{Asymptotic results}\label{sec:mr}

We will begin by proving the results on asymptotic regularity, both qualitative and quantitative, which will be then followed by the $\Delta$-convergence ones. The following is the analogue of Theorem~\ref{th-comp}.

\begin{theorem}\label{main}
Let $(X,d)$ be a CAT(0) space and let $T_1, T_2 : X \to X$ satisfy property $(P_2)$. Let $\lambda \in(0,1)$ and put $T:=(1-\lambda)T_1 + \lambda T_2$. Assuming that $Fix(T) \neq \emptyset$, we have that $T$ is asymptotically regular.
\end{theorem}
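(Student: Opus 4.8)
The plan is to reduce this to the already-known composition case, Theorem~\ref{th-comp}, via the convex combination metric machinery developed in Section~\ref{sec:cc}. Concretely, I would work in the space $(X^2, d_\lambda)$ and consider the two mappings $U$ (from Proposition~\ref{prop23}) and $Q$ (from Proposition~\ref{prop24}). By Proposition~\ref{prop23}, $U$ satisfies property $(P_2)$ with respect to $d_\lambda$; by Proposition~\ref{prop24}, $Q$ is a metric projection onto the closed convex set $\Delta_X$, hence firmly nonexpansive, hence also satisfies property $(P_2)$ with respect to $d_\lambda$. Thus $Q \circ U$ is a composition of two $(P_2)$ mappings on the CAT(0) space $(X^2,d_\lambda)$, exactly the setup of Theorem~\ref{th-comp}.

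The next step is to verify the hypothesis of Theorem~\ref{th-comp}, namely that $Fix(Q \circ U) \neq \emptyset$. This is handled by Lemma~\ref{fixqu}.(i): since we are assuming $Fix(T) \neq \emptyset$, picking any $p \in Fix(T)$ gives $(p,p) \in Fix(Q \circ U)$. Therefore Theorem~\ref{th-comp} applies and yields that $Q \circ U$ is asymptotically regular on $(X^2,d_\lambda)$, i.e. for every starting point $(z_1,z_2) \in X^2$ we have $d_\lambda\big((Q \circ U)^n(z_1,z_2), (Q \circ U)^{n+1}(z_1,z_2)\big) \to 0$.

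Finally, I would transport this conclusion back to $X$. Fix an arbitrary $x \in X$ and run the argument from the diagonal point $(x,x)$. By Lemma~\ref{fixqu}.(iii), $(Q \circ U)^n(x,x) = (T^n x, T^n x)$ for all $n$, so both iterates of $Q \circ U$ involved stay on the diagonal. By Lemma~\ref{fixqu}.(ii), $d_\lambda\big((T^n x, T^n x), (T^{n+1} x, T^{n+1} x)\big) = d(T^n x, T^{n+1} x)$. Combining these with the asymptotic regularity of $Q \circ U$ gives $d(T^n x, T^{n+1} x) \to 0$, which is precisely what it means for $T$ to be asymptotically regular. Since $x$ was arbitrary, we are done.

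There is no real obstacle here: the proposition is essentially a bookkeeping exercise once the three preparatory lemmas and two propositions of Section~\ref{sec:cc} are in place — all the genuine work (constructing $d_\lambda$, identifying $Q$ as a projection, and the fixed-point/diagonal identities) has already been done. The only point requiring a moment's care is making sure one invokes Theorem~\ref{th-comp} in the correct order of composition, $T = Q \circ U$ rather than $U \circ Q$, so that $(Q\circ U)^n(x,x)$ genuinely collapses onto $(T^n x, T^n x)$; this is exactly what Lemma~\ref{fixqu}.(iii) records.
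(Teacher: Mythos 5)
Your proposal is correct and follows essentially the same route as the paper: pass to $(X^2,d_\lambda)$, note that $Q$ is a metric projection (hence firmly nonexpansive, hence $(P_2)$) and $U$ is $(P_2)$ by Proposition~\ref{prop23}, use Lemma~\ref{fixqu}.(i) to get $Fix(Q\circ U)\neq\emptyset$, apply Theorem~\ref{th-comp}, and transport the conclusion back to $X$ via Lemma~\ref{fixqu}, (ii) and (iii). No gaps to report.
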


\begin{proof}
Let $x \in X$ and put, for any $n \in \N$, $x_n:=T^nx$. We have to show that
$$ \lim_{n \to \infty}d(x_n,x_{n+1}) = 0.$$
Define $U$ as in Proposition~\ref{prop23} and $Q$ as in Proposition~\ref{prop24}. By Lemma~\ref{fixqu}.(i), $Fix(Q \circ U) \neq \emptyset$.

Denote by $(\vec{x}_n)$ the Picard iteration of $Q \circ U$ starting from $(x,x)$. Since $Q$ is, by Proposition~\ref{prop24}, a metric projection operator in a CAT(0) space, it is firmly nonexpansive and hence satisfies property $(P_2)$. We may then apply Theorem~\ref{th-comp} to obtain that
\begin{equation}
\lim_{n \to \infty} d_\lambda(\vec{x}_n,\vec{x}_{n+1})=0. \label{asreg-vec}
\end{equation}

Since, by Lemma~\ref{fixqu}.(iii), for any $n$, $\vec{x}_n = (x_n,x_n)$, we get that \eqref{asreg-vec}, together with Lemma~\ref{fixqu}.(ii), entails
$$ \lim_{n \to \infty}d(x_n,x_{n+1}) = 0,$$
which is what was needed to be shown.
\end{proof}

In the light of the above proof, it is hopefully now clear that the rate of asymptotic regularity previously obtained for the composition case also holds for the convex combination case. The following proposition and proof make this completely explicit.

\begin{proposition}\label{main-rate}
Let $(X,d)$ be a CAT(0) space and let $T_1, T_2 : X \to X$ satisfy property $(P_2)$. Let $\lambda \in (0,1)$ and put $T:=(1-\lambda)T_1 + \lambda T_2$. Let $p \in Fix(T)$ and $b > 0$. Define, for any $\eps>0$, $\Phi_b(\eps)$ as in Theorem~\ref{th-comp-rate}.

Then, for any $x \in X$ with $d(x,p) \leq b$, we have that
$$\forall \eps > 0\ \forall n \geq \Phi_b(\eps)\ d(T^nx, T^{n+1}x) \leq \eps.$$
\end{proposition}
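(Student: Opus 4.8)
The plan is to reduce the convex combination case to the composition case exactly as in the proof of Theorem~\ref{main}, but now tracking the quantitative bound through the reduction. Given $x \in X$ with $d(x,p) \leq b$, I would set $x_n := T^n x$ and pass to the product space $(X^2, d_\lambda)$ with the mappings $U$ from Proposition~\ref{prop23} and $Q$ from Proposition~\ref{prop24}. The point $(p,p)$ lies in $Fix(Q \circ U)$ by Lemma~\ref{fixqu}.(i), and by Lemma~\ref{fixqu}.(ii) we have $d_\lambda((x,x),(p,p)) = d(x,p) \leq b$, so the distance bound $b$ is preserved verbatim when passing to the product space.

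Next I would invoke the quantitative composition result, Theorem~\ref{th-comp-rate}, applied to the CAT(0) space $(X^2,d_\lambda)$, the mappings $U$ and $Q$ (both of which satisfy property $(P_2)$ — $U$ by Proposition~\ref{prop23}, and $Q$ because it is a metric projection by Proposition~\ref{prop24}, hence firmly nonexpansive, hence satisfies $(P_2)$), the fixed point $(p,p)$, and the same constant $b$. This yields that for all $\eps > 0$ and all $n \geq \Phi_b(\eps)$,
$$d_\lambda\big((Q \circ U)^n(x,x), (Q \circ U)^{n+1}(x,x)\big) \leq \eps,$$
with $\Phi_b$ being literally the function from Theorem~\ref{th-comp-rate} — no modification is needed because $\Phi_b$ depends only on $b$ and $\eps$, not on the space or the mappings.

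Finally, I would use Lemma~\ref{fixqu}.(iii), which gives $(Q \circ U)^n(x,x) = (T^n x, T^n x) = (x_n, x_n)$ for every $n$, together with Lemma~\ref{fixqu}.(ii), which gives $d_\lambda((x_n,x_n),(x_{n+1},x_{n+1})) = d(x_n, x_{n+1})$. Substituting these identities into the displayed inequality above yields exactly $d(T^n x, T^{n+1} x) \leq \eps$ for all $n \geq \Phi_b(\eps)$, which is the claimed conclusion.

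I do not expect any genuine obstacle here: the entire content is the observation that the reduction used to prove Theorem~\ref{main} is \emph{rate-preserving}, which in turn rests on the highly uniform (space- and mapping-independent) form of $\Phi_b$ in Theorem~\ref{th-comp-rate}. The only point requiring a word of care is to confirm that $Q$ satisfies property $(P_2)$ and that $d(x,p) \leq b$ transfers to $d_\lambda((x,x),(p,p)) \leq b$ — both of which are already available from the earlier results. So the proof is essentially a transcription of the proof of Theorem~\ref{main} with Theorem~\ref{th-comp} replaced by its quantitative counterpart Theorem~\ref{th-comp-rate}.
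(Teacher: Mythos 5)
Your proposal is correct and follows essentially the same route as the paper: the same reduction to $(X^2,d_\lambda)$ via $U$ and $Q$, the same use of Lemma~\ref{fixqu}.(i)--(iii) to transfer the fixed point, the bound $b$, and the distances, and the same application of Theorem~\ref{th-comp-rate}, whose uniformity makes the rate $\Phi_b$ carry over unchanged. No gaps.
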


\begin{proof}
Let $x \in X$ with $d(x,p) \leq b$. We shall use the notation (and some results obtained in this framework) from the proof of Theorem~\ref{main}. Since, by Lemma~\ref{fixqu}.(i), $(p,p)$ is a fixed point of $Q \circ U$ and, by Lemma~\ref{fixqu}.(ii),
$$d_\lambda((x,x),(p,p)) = d(x,p) \leq b,$$
we may apply Theorem~\ref{th-comp-rate} to obtain that
$$\forall \eps > 0\ \forall n \geq \Phi_b(\eps)\ d_\lambda(\vec{x}_n,\vec{x}_{n+1}) \leq \eps.$$
But this immediately yields our result since, as before, we have, again by Lemma~\ref{fixqu}, (ii) and (iii), that for all $n$, $d_\lambda(\vec{x}_n,\vec{x}_{n+1}) = d(T^nx, T^{n+1}x)$.
\end{proof}

We will now use Lemma~\ref{quant-incl} to obtain a quantitative result in the case of projections.

\begin{corollary}\label{rate-proj}
Let $(X,d)$ be a CAT(0) space and let $A$ and $B$ be two convex, closed, nonempty subsets of $X$. Let $\lambda \in (0,1)$ and put $T:=(1-\lambda)P_A + \lambda P_B$. Set
$$r:=d(A,B) = \inf_{(x,y) \in A \times B} d(x,y).$$
Let $(x^*,y^*) \in S_{A,B}$ and $M,b>0$. Set $u^*:=(1-\lambda)x^* + \lambda y^*$.

Then, for any $x\in X$ with $d(x,u^*) \leq M$ and $d^2(P_Ax,P_Bx) \leq b$, we have that 
$$\forall \eps > 0\ \forall n \geq \left(\left\lfloor \frac{64M^2b}{\eps^4\lambda(1-\lambda)} \right\rfloor + 2\right)\ d(P_AT^nx, P_BT^nx) \leq r+ \eps.$$
\end{corollary}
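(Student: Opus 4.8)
The plan is to run the same reduction as in the proofs of Theorem~\ref{main} and Proposition~\ref{main-rate}, now feeding the outcome through the approximate set-inclusion Lemma~\ref{quant-incl}. Define $U$ as in Proposition~\ref{prop23} with $T_1:=P_A$, $T_2:=P_B$, and $Q$ as in Proposition~\ref{prop24}. Since the square of $d_\lambda$ splits over the two coordinates, $U$ is exactly the metric projection of $(X^2,d_\lambda)$ onto the closed convex set $A\times B$, while $Q=P_{\Delta_X}$ by Proposition~\ref{prop24}; hence $Q\circ U=P_{\Delta_X}\circ P_{A\times B}$ is an alternating-projections composition in $(X^2,d_\lambda)$, and Corollary~\ref{quant-best-approx} is available for it.

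First I would assemble the data needed by Corollary~\ref{quant-best-approx} in $(X^2,d_\lambda)$. By Lemma~\ref{sdelta}, $((u^*,u^*),(x^*,y^*))\in S_{\Delta_X,A\times B}$, so $q:=d_\lambda^2(\Delta_X,A\times B)$ is attained, and using $d(u^*,x^*)=\lambda\,d(x^*,y^*)$ and $d(u^*,y^*)=(1-\lambda)\,d(x^*,y^*)$ one computes $q=\lambda(1-\lambda)d^2(x^*,y^*)=\lambda(1-\lambda)r^2$. For the initial point $(x,x)$: by Lemma~\ref{fixqu}.(ii), $d_\lambda((x,x),(u^*,u^*))=d(x,u^*)\le M$; and since $P_{A\times B}(x,x)=(P_Ax,P_Bx)$ and $P_{\Delta_X}P_{A\times B}(x,x)=Q(P_Ax,P_Bx)$, the identity established inside the proof of Proposition~\ref{prop24} gives
$$d_\lambda^2\bigl(P_{\Delta_X}P_{A\times B}(x,x),\,P_{A\times B}(x,x)\bigr)=\lambda(1-\lambda)\,d^2(P_Ax,P_Bx)\le \lambda(1-\lambda)b.$$
Thus Corollary~\ref{quant-best-approx} applies with its parameters $M$ and $b$ replaced by $M$ and $\lambda(1-\lambda)b$ respectively.

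Applying it with precision $\eps':=\tfrac{\eps^2}{4}\lambda(1-\lambda)$, and using that $(Q\circ U)^n(x,x)=(T^nx,T^nx)$ (immediate by induction on $n$, cf. Lemma~\ref{fixqu}.(iii)) together with $P_{A\times B}(T^nx,T^nx)=(P_AT^nx,P_BT^nx)$, one gets, for every $n\ge \bigl\lfloor 4M^2\lambda(1-\lambda)b/(\eps')^2\bigr\rfloor+2=\bigl\lfloor 64M^2b/(\eps^4\lambda(1-\lambda))\bigr\rfloor+2$,
$$d_\lambda^2\bigl((T^nx,T^nx),(P_AT^nx,P_BT^nx)\bigr)\le q+\tfrac{\eps^2}{4}\lambda(1-\lambda).$$
To finish I would apply Lemma~\ref{quant-incl} with $w:=T^nx$ and with the two points there taken to be $P_AT^nx\in A$ and $P_BT^nx\in B$: its hypothesis is satisfied because $d_\lambda^2((x',x'),(y,z))\ge d_\lambda^2(\Delta_X,A\times B)=q$ for all $x'\in X$, $y\in A$, $z\in B$, so the displayed bound already dominates $d_\lambda^2((x',x'),(y,z))+\tfrac{\eps^2}{4}\lambda(1-\lambda)$. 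Lemma~\ref{quant-incl} then yields $d(P_AT^nx,P_BT^nx)\le d(y,z)+\eps$ for all $y\in A$, $z\in B$, and taking the infimum over such $y,z$ gives $d(P_AT^nx,P_BT^nx)\le r+\eps$.

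The part requiring care is the bookkeeping of the reduction — which projection is the inner one, the computation $q=\lambda(1-\lambda)r^2$, and the translation of the two hypotheses on $x$ into hypotheses on $(x,x)$ — together with the key observation that passing from the estimate established in the averaged space back to an \emph{approximate} best-approximation statement in $X$ is precisely the job of Lemma~\ref{quant-incl}, with the choice $\eps'=\tfrac{\eps^2}{4}\lambda(1-\lambda)$ producing exactly the stated rate. Everything else is routine computation.
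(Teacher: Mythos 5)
Your proposal is correct and follows essentially the same route as the paper's proof: identify $Q\circ U$ with $P_{\Delta_X}\circ P_{A\times B}$ in $(X^2,d_\lambda)$, translate the hypotheses via $q=\lambda(1-\lambda)r^2$ and $d_\lambda^2((Tx,Tx),(P_Ax,P_Bx))=\lambda(1-\lambda)d^2(P_Ax,P_Bx)$, apply Corollary~\ref{quant-best-approx} with precision $\tfrac{\eps^2}{4}\lambda(1-\lambda)$, and finish with Lemma~\ref{quant-incl}. The bookkeeping (inner versus outer projection, the substituted $M$ and $\lambda(1-\lambda)b$, and the resulting rate) matches the paper exactly.
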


\begin{proof}
It is clear that $\Delta_X$ and $A \times B$ are two closed, convex, nonempty subsets of $(X^2,d_\lambda)$. Set $q:=d^2(\Delta_X, A \times B)$. (Incidentally, by Lemma~\ref{sdelta}, we have that $q=\lambda(1-\lambda)r^2$.)

Define $U$ as in Proposition~\ref{prop23} and $Q$ as in Proposition~\ref{prop24} (for $T_1:=P_A$ and $T_2:=P_B$). Denote by $(\vec{x}_n)$ the Picard iteration of $Q \circ U$ starting from $(x,x)$. By Proposition~\ref{prop24}, we also have that $Q=P_{\Delta_X}$ and it is immediate that $U=P_{A\times B}$.

By Lemma~\ref{sdelta}, we have that $((u^*,u^*),(x^*,y^*)) \in S_{\Delta_X,A\times B}$. By our hypothesis, we also have that $d((x,x),(u^*,u^*)) \leq M$ and that
$$d_\lambda^2((Tx,Tx),(P_Ax,P_Bx)) = \lambda(1-\lambda) d^2(P_Ax,P_Bx) \leq \lambda(1-\lambda) b.$$
Let $\eps > 0$. We may now apply Corollary~\ref{quant-best-approx} to obtain that 
$$\forall n \geq \left(\left\lfloor \frac{64M^2b}{\eps^4\lambda(1-\lambda)} \right\rfloor + 2\right)\ d^2_\lambda((QU)^n(x,x),U(QU)^n(x,x)) \leq q+ \frac{\eps^2}4 \cdot \lambda(1-\lambda).$$
Let, therefore, $n \geq \left\lfloor \frac{64M^2b}{\eps^4\lambda(1-\lambda)} \right\rfloor + 2$. Then, for all $x \in X$, $y \in A$ and $z \in B$,
$$ d^2_\lambda((T^nx,T^nx),(P_AT^nx,P_BT^nx)) \leq d^2_\lambda((x,x),(y,z))+ \frac{\eps^2}4 \cdot \lambda(1-\lambda).$$
We apply Lemma~\ref{quant-incl} to get that for all $y \in A$ and $z \in B$,
$$d(P_AT^nx,P_BT^nx) \leq d(y,z) + \eps,$$
i.e., that
$$d(P_AT^nx,P_BT^nx) \leq r + \eps.$$
\end{proof}

Finally, we proceed to derive the corresponding analogues of the results on $\Delta$-convergence.

\begin{theorem}\label{th-deltaconv2}
Let $(X,d)$ be a complete CAT(0) space and let $T_1, T_2 : X \to X$ satisfy property $(P_2)$. Let $\lambda \in (0,1)$ and put $T: = (1-\lambda)T_1 + \lambda T_2$. Assuming that $Fix(T) \neq \emptyset$, we have that for any $x \in X$, the Picard iteration of $T$ starting with $x$ $\Delta$-converges to a fixed point of $T$.
\end{theorem}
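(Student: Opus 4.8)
The plan is to run exactly the same reduction as in the proof of Theorem~\ref{main}, only now invoking the $\Delta$-convergence result for compositions (Theorem~\ref{th-deltaconv}) in place of the asymptotic regularity one, and then transferring the conclusion back to $X$ via Lemma~\ref{doubledelta}. Concretely, fix $x \in X$ and set $x_n := T^n x$. Define $U$ as in Proposition~\ref{prop23} and $Q$ as in Proposition~\ref{prop24}, and let $(\vec{x}_n)$ be the Picard iteration of $Q \circ U$ starting from $(x,x)$. By Lemma~\ref{fixqu}.(i), $Fix(Q\circ U)\neq\emptyset$, and by Lemma~\ref{fixqu}.(iii) we have $\vec{x}_n = (x_n,x_n)$ for all $n$.

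Next I would check the hypotheses of Theorem~\ref{th-deltaconv} for the pair $U$ and $Q$ acting on the complete CAT(0) space $(X^2,d_\lambda)$: the mapping $U$ satisfies property $(P_2)$ by Proposition~\ref{prop23}, while $Q$ is, by Proposition~\ref{prop24}, a metric projection operator in a CAT(0) space, hence firmly nonexpansive, hence satisfies property $(P_2)$. Applying Theorem~\ref{th-deltaconv} to $Q \circ U$, we obtain that $(\vec{x}_n)$ $\Delta$-converges in $(X^2,d_\lambda)$ to some fixed point of $Q \circ U$; by Lemma~\ref{fixqu}.(i) this fixed point has the form $(p,p)$ for some $p \in Fix(T)$.

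Finally, I would invoke Lemma~\ref{doubledelta}. Since $(\vec{x}_n) = ((x_n,x_n))$ is $\Delta$-convergent it is in particular bounded, and then Lemma~\ref{fixqu}.(ii) gives $d(x_n,x_m) = d_\lambda((x_n,x_n),(x_m,x_m))$, so $(x_n)$ is bounded in $X$ as well; thus Lemma~\ref{doubledelta} applies and yields that $(x_n)$ $\Delta$-converges to $p \in Fix(T)$, which is exactly the claim.

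I do not expect a genuine obstacle here: all the technical work has already been front-loaded into Section~\ref{sec:cc}, so the proof is essentially bookkeeping — the only points requiring a moment's care are observing that $Q$ inherits property $(P_2)$ from being a metric projection, and that boundedness of $(x_n)$ is not assumed but follows, via Lemma~\ref{fixqu}.(ii), from the boundedness built into the conclusion of Theorem~\ref{th-deltaconv}.
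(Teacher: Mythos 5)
Your proposal is correct and follows essentially the same route as the paper's own proof: reduce to $Q\circ U$ on $(X^2,d_\lambda)$, apply Theorem~\ref{th-deltaconv} (with $Q$ satisfying $(P_2)$ as a metric projection), identify the limit as $(p,p)$ via Lemma~\ref{fixqu}, and transfer back with Lemma~\ref{doubledelta}. Your extra remark that boundedness of $(x_n)$ follows from Lemma~\ref{fixqu}.(ii) is a small point the paper leaves implicit, but there is no substantive difference.
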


\begin{proof}
Define $U$ as in Proposition~\ref{prop23} and $Q$ as in Proposition~\ref{prop24}. By Lemma~\ref{fixqu}.(i), $Fix(Q \circ U) \neq \emptyset$. Denote by $(\vec{x}_n)$ the Picard iteration of $Q \circ U$ starting from $(x,x)$. As in the proof of Theorem~\ref{main}, we know from Proposition~\ref{prop24} that $Q$ satisfies property $(P_2)$.

We may now apply Theorem~\ref{th-deltaconv} to get that there is an $u \in Fix(Q \circ U)$ such that $(\vec{x}_n)$ $\Delta$-converges to $u$. By Lemma~\ref{fixqu}, (i) and (iii), we have that there is a $p \in Fix(T)$ such that $((T^nx,T^nx))$ $\Delta$-converges to $(p,p)$. Our conclusion follows using Lemma~\ref{doubledelta}.
\end{proof}

\begin{corollary}\label{cor-deltaconv2}
Let $(X,d)$ be a complete CAT(0) space and let $A$ and $B$ be two convex, closed, nonempty subsets of $X$. Let $\lambda \in (0,1)$ and put $T: = (1-\lambda)P_A + \lambda P_B$. Assuming that $S_{A,B} \neq \emptyset$, we have that for any $x \in X$ there is a pair $(a,b) \in S_{A,B}$ such that the Picard iteration of $T$ starting with $x$ $\Delta$-converges to $p:=(1-\lambda)a + \lambda b$.
\end{corollary}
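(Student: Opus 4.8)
The plan is to reduce to the alternating--projections result, Corollary~\ref{cor-deltaconv}, applied inside the averaged space $(X^2,d_\lambda)$, in exactly the way Theorem~\ref{th-deltaconv2} reduced to Theorem~\ref{th-deltaconv}. First I would introduce $U$ as in Proposition~\ref{prop23} and $Q$ as in Proposition~\ref{prop24}, both for $T_1:=P_A$ and $T_2:=P_B$, so that $U(x_1,x_2)=(P_Ax_1,P_Bx_2)$. Since $d_\lambda^2((x_1,x_2),(y_1,y_2))=(1-\lambda)d^2(x_1,y_1)+\lambda d^2(x_2,y_2)$, minimizing the right-hand side over $(y_1,y_2)\in A\times B$ splits into two independent one-coordinate minimizations, whence $U$ is the metric projection onto $A\times B$ in $(X^2,d_\lambda)$; and $Q=P_{\Delta_X}$ by Proposition~\ref{prop24}. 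Here $A\times B$ is closed and convex in $(X^2,d_\lambda)$ (closed because $d_\lambda$-convergence is coordinatewise, convex because geodesics in $X^2$ are coordinatewise, as recorded in the opening Proposition of Section~\ref{sec:cc}), $\Delta_X$ is closed and convex by Proposition~\ref{prop24}, and $(X^2,d_\lambda)$ is a complete CAT(0) space by that same Proposition.

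Next I would verify the hypothesis $S_{\Delta_X,A\times B}\neq\emptyset$, which follows from $S_{A,B}\neq\emptyset$ by the backward inclusion of Lemma~\ref{sdelta}. We may therefore apply Corollary~\ref{cor-deltaconv} in $(X^2,d_\lambda)$ to the pair of sets $\Delta_X,\ A\times B$ and the mapping $Q\circ U=P_{\Delta_X}\circ P_{A\times B}$, starting from the point $(x,x)$. It produces a best approximation pair $(\mathbf{a},\mathbf{b})\in S_{\Delta_X,A\times B}$ such that the Picard iteration of $Q\circ U$ from $(x,x)$ $\Delta$-converges to $\mathbf{a}$ in $(X^2,d_\lambda)$.

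Finally I would unfold this via the lemmas of Section~\ref{sec:cc}. By Lemma~\ref{fixqu}.(iii), the Picard iteration of $Q\circ U$ from $(x,x)$ is exactly $((T^nx,T^nx))$. By the explicit description in Lemma~\ref{sdelta}, $\mathbf{a}=(u,u)$ with $u=(1-\lambda)a+\lambda b$ for some $(a,b)\in S_{A,B}$; set $p:=u$. Thus $((T^nx,T^nx))$ $\Delta$-converges to $(p,p)$. Being $\Delta$-convergent it is bounded in $(X^2,d_\lambda)$, and by Lemma~\ref{fixqu}.(ii) this forces $(T^nx)$ to be bounded in $X$; Lemma~\ref{doubledelta} then gives that $(T^nx)$ $\Delta$-converges to $p=(1-\lambda)a+\lambda b$, which is the claim.

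I do not expect a genuine obstacle here: all the substantive work has already been placed into Proposition~\ref{prop24} and Lemmas~\ref{fixqu}, \ref{sdelta} and \ref{doubledelta}. The only point requiring care is bookkeeping --- namely, that the first coordinate of the best approximation pair handed back by Corollary~\ref{cor-deltaconv} in the product space is correctly read off, through Lemma~\ref{sdelta}, as the $\lambda$-convex combination $(1-\lambda)a+\lambda b$ of a best approximation pair of $A$ and $B$, and that one records the boundedness of the iteration before invoking Lemma~\ref{doubledelta}.
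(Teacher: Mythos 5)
Your proposal is correct and follows essentially the same route as the paper: reduce to Corollary~\ref{cor-deltaconv} in $(X^2,d_\lambda)$ via $Q\circ U=P_{\Delta_X}\circ P_{A\times B}$, use Lemma~\ref{sdelta} both for the nonemptiness of $S_{\Delta_X,A\times B}$ and to read off the limit as $((1-\lambda)a+\lambda b,(1-\lambda)a+\lambda b)$, then conclude by Lemma~\ref{fixqu}.(iii) and Lemma~\ref{doubledelta}. The extra details you supply (the splitting argument for $U=P_{A\times B}$, the boundedness remark) are fine but are treated as immediate in the paper.
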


\begin{proof}
As in the proof of Corollary~\ref{rate-proj}, it is clear that $\Delta_X$ and $A \times B$ are two closed, convex, nonempty subsets of $(X^2,d_\lambda)$. Define $U$ as in Proposition~\ref{prop23} and $Q$ as in Proposition~\ref{prop24} (for $T_1:=P_A$ and $T_2:=P_B$). Denote by $(\vec{x}_n)$ the Picard iteration of $Q \circ U$ starting from $(x,x)$. Again, as before, by Proposition~\ref{prop24}, we also have that $Q=P_{\Delta_X}$ and it is immediate that $U=P_{A\times B}$.

By Lemma~\ref{sdelta}, we know that $S_{\Delta_X,A\times B} \neq \emptyset$. We may apply Corollary~\ref{cor-deltaconv} and Lemma~\ref{sdelta} to get that there is a pair $(a,b) \in S_{A,B}$ such that $(\vec{x}_n)$ $\Delta$-converges to $(p,p)$, where we have set $p:=(1-\lambda)a + \lambda b$. By Lemma~\ref{fixqu}.(iii), $((T^nx,T^nx))$ $\Delta$-converges to $(p,p)$. Our conclusion follows using Lemma~\ref{doubledelta}.
\end{proof}

\section{Acknowledgements}

I would like to thank Ulrich Kohlenbach for suggesting to me this problem to work on. I am also grateful to Genaro L\'opez-Acedo and to Adriana Nicolae for their valuable geometric insights.

This work has been supported by the German Science Foundation (DFG Project KO 1737/6-1).

\end{document}